\def\euro{\mathop{\hbox{\eursf}}}%
\newtheorem{newstatement}{newstatement}
\newtheorem{definition}[newstatement]{Definition}
\newtheorem{lemma}[newstatement]{Lemma}
\newtheorem{theorem}[newstatement]{Theorem}
\newtheorem{corollary}[newstatement]{Corollary}
\newtheorem{question}[newstatement]{Question}
\newtheorem{remark}[newstatement]{Remark}
\newtheorem{proposition}[newstatement]{Proposition}
\def\fieldstyle{\Bbb} 
\def\p_#1{p_{\kern-1.5pt_#1}}
\def\emph#1{{\sl #1}}
\def\textbf#1{{\bf #1}}
\def\MR#1{\relax}
\newcommand{\Cl}{\mathop{\mathrm{Cl}}\nolimits} 
\newcommand{\Int}{\mathop{\mathrm{Int}}\nolimits} 
\newcommand{\Bd}{\mathop{\mathrm{Bd}}\nolimits}
\newcommand{\Map}{\mathop{\cal M}\nolimits}
\newcommand{\rank}{\mathop{\mathrm{rank}}\nolimits}
\newcommand{\R}{\fieldstyle R}
\renewcommand{\S}{\mathbf S}
\renewcommand{\P}{\mathbf P}
\newcommand{\Z}{\Bbb Z}
\newcommand{\Sing}{\mathop{\mathrm{Sing}}\nolimits}
\def\(#1){({\em #1\/})}
\def\varemptyset{{\text{\raise.21ex\hbox{$\not$}}\mkern.15mu\mathrm{O}\mkern.15mu}}
\let\emptyset\varemptyset
\let\geq\geqslant
\let\leq\leqslant
\renewcommand{\section}{\@startsection%
{section}
{1}
{0mm}
{1.5\bigskipamount}
{\bigskipamount}
{\centering\normalsize\bf}}
\renewcommand{\paragraph}{\@startsection%
{paragraph}
{4}
{0mm}
{\bigskipamount}
{-1.25ex}
{\normalsize\bf}}
\title{\normalsize\bf\uppercase{Representing Dehn twists with branched coverings}}
\author{\normalsize\sc Daniele Zuddas\\[-4pt]
{\normalsize Scuola Normale Superiore di Pisa, Italy}\\[-4pt]
{\normalsize\tt d.zuddas@gmail.com}}
\date{}
\begin{document}
\let\caal\cal
\def\cal#1{{\caal #1}}
\proofingfalse

\maketitle

\begin{abstract}
\smallskip
\noindent
We show that any homologically
non-trivial Dehn twist of a compact surface $F$ with
boundary is the lifting of a half-twist in the braid
group ${\cal B}_{n}$, with respect to a suitable branched
covering
$p : F \to B^2$. In particular, we allow the surface to have disconnected boundary. As a consequence, any allowable
Lefschetz fibration on
$B^2$ is a branched covering of
$B^2\times B^2$.

\medskip\smallskip\noindent {\sl Keywords}\/: surface, 2-manifold, Dehn twist, half-twist, liftable braid, branched covering, 4-manifold, Lefschetz fibration.

\medskip\noindent {\sl AMS Classification}\/: 57M12, 57N05.
\end{abstract}

\section*{Introduction and main results}

Let $F$ be a compact, connected, oriented surface with
boundary, and
$p: F\to B^2$ be a simple branched cover
of the 2-disc, with degree $d$
and $n$ branching points.
It is a standard fact in branched covers theory that if
$d ³ 3$, then each element $h$ in the mapping class group
$\cal M(F)$ is the lifting of a braid $k\in \cal B_n$
\cite{MM91}, meaning that the following diagram is commutative:
$$\begin{CD}
    F @> h >> F\\
    @V  p VV @VV  p V\\
    B^2 @>>  k > B^2
\end{CD}$$

Since $\cal M(F)$ is generated by Dehn twists it is
natural and interesting to get a braid $k$ in some
special form, whose lift is a given Dehn twist $h$. 

The aim of this paper is to show that $k$ can be chosen
as a half-twist in the braid group $\cal B_n$, under the further
assumptions that
$h$ is homologically non-trivial and by allowing the
covering to be changed by stabilizations. More precisely
we prove the following:

\begin{theorem}[Representation Theorem] \label{lift-twist/thm} \sl Let $p : F \to B^2$
be a simple branched covering and let $\gamma
\subset F$ be a connected closed curve. Then the Dehn twist $t_\gamma$
along
$\gamma$ is the lifting of a half-twist in $\cal B_n$, up
to stabilizations of
$p$, if and only if $[\gamma] \ne 0$ in $H_1(F)$.
\end{theorem}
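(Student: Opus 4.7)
The statement is an equivalence and the two directions call for rather different arguments. For the ``only if'' direction, suppose $t_\gamma$ lifts a half-twist $\sigma_\alpha\in\mathcal{B}_{n'}$ with respect to some stabilization $p'\colon F\to B^2$ of $p$. Liftability of $\sigma_\alpha$ forces the two branching points at the endpoints of the arc $\alpha$ to carry a common monodromy transposition $(a\,b)$, so a regular disc neighborhood $N$ of $\alpha$ pulls back to an annulus $\tilde N\subset F$ double-branched over $N$ at those two points, together with $d'-2$ unbranched disc copies on the remaining sheets. The lift of $\sigma_\alpha$ is then supported in $\tilde N$ and equals the Dehn twist along the core $\gamma=p'^{-1}(\alpha)\cap\tilde N$. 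I would then verify $[\gamma]\ne 0$ in $H_1(F)$ by ruling out any bounding $W\subset F$ with $\partial W=\gamma$: such a $W$ would contain exactly one of the two branch points of $\tilde N$ and would project via $p'$ to a subset of $B^2$ whose topological boundary is the arc $\alpha$, and a Riemann--Hurwitz and monodromy analysis rules this out.

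For the ``if'' direction, which is the substantive part of the theorem, I would construct the required stabilization directly. Starting from a simple closed curve $\gamma\subset F$ with $[\gamma]\ne 0$, the plan is to first isotope $\gamma$ so that its image $p(\gamma)\subset B^2$ is an immersed loop missing the current branching set, and then to apply a sequence of stabilizations of $p$ --- introducing cancelling pairs of branching points along arcs of $B^2$, and adding extra sheets when needed --- which simplify $p(\gamma)$ step by step until it is reduced to the double traversal of a single arc $\alpha$ joining two newly created branching points with matching monodromy. At that point $\gamma$ coincides with the core of the branch annulus above a neighborhood of $\alpha$, and $t_\gamma$ is realized as the lift of the half-twist $\sigma_\alpha$ with respect to the stabilized covering.

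The main obstacle is to show that this geometric simplification can always be carried out compatibly with the monodromy data when $[\gamma]\ne 0$, and is obstructed when $[\gamma]=0$. I expect the obstruction to be a $\Z/2$-valued class measuring how the two sides of $\gamma$ sit over the two sheets $a,b$ of the newly created branching pair, whose vanishing is equivalent to the homological hypothesis. The technical heart of the proof will therefore be this obstruction analysis, presumably phrased as a case split into non-separating $\gamma$ (where the obstruction vanishes automatically) and separating but non-null-homologous $\gamma$ (where it vanishes because each component of $F\setminus\gamma$ meets $\partial F$).
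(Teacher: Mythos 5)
Your broad outline of the ``if'' direction---stabilize the covering to simplify the projected image of $\gamma$ to the canonical model of a Jordan curve enclosing exactly two branch points with matching monodromy---is indeed the right picture, and your ``only if'' conclusion is correct. But the proposal has two genuine problems.

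First, the ``only if'' direction is over-engineered and not actually completed. The paper's argument is a one-line intersection computation: pick a proper arc $\beta\subset B^2$ meeting $\alpha$ transversely in one point; a suitable lift $\tilde\beta\subset F$ meets $\gamma$ transversely in one point; since the intersection pairing $H_1(F)\times H_1(F,\Bd F)\to H_0(F)\cong\Z$ (or use $\Z_2$ coefficients to avoid orientation issues) is then nonzero on $([\gamma],[\tilde\beta])$, we get $[\gamma]\ne 0$. Your plan of ``ruling out any bounding $W$ by a Riemann--Hurwitz and monodromy analysis'' is left unspecified and is a detour around something that follows immediately from the existence of a dual arc. Second, and more seriously, the part you yourself call ``the technical heart''---the claimed $\Z/2$-valued obstruction class with a case split into non-separating $\gamma$ versus separating-but-essential $\gamma$---is pure speculation, never constructed, and does not match how the argument actually works. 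There is no obstruction class and no such case split. The hypothesis $[\gamma]\ne 0$ is used for exactly one thing: it guarantees a properly embedded arc $\tilde s\subset F$ meeting $\gamma$ transversely in a single point (equivalently, every component of $F-\gamma$ touches $\Bd F$). The projection of this dual arc, suitably straightened, is the engine driving the whole simplification: it tells you where to stabilize so that branching points can be swept out of each innermost $1$-gon of the labelled diagram of $\gamma$ (this is the content of Lemmas~\ref{arco/lem} and~\ref{costruzione/lem}), which in turn lets each unlabelled Reidemeister-type move $\cal H_1,\cal H_3,\cal H_4$ needed to desingularize $p(\gamma)$ be promoted to a liftable move $\cal T_1,\cal T_3,\cal T_4$. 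Without this mechanism---or some concrete replacement for it---your ``if'' direction is not a proof but a statement of intent, with its central step deferred to an analysis you have not carried out and whose form you are only guessing at.
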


Actually, the proof of this theorem provides us with an
effective algorithm based on suitable and well-understood
moves on the diagram of $\gamma$, namely the labelled projection of
$\gamma$ in
$B^2$, allowing us to determine the stabilizations needed
and the half-twist whose lifting is $t_\gamma$. 

Roughly speaking the proof goes as follows. As a first
step, by stabilizing the covering, we eliminate the self-intersections of the diagram of
$\gamma$ without changing its isotopy class in $F$. Thus,
we get a non-singular diagram
which can be changed to one
whose interior contains exactly two branching points  of
$p$. Then the proof is completed by the simple
observation that the half-twist around an arc joining
these two points and lying on the interior of the diagram
lifts to the prescribed Dehn twist $t_\gamma$. 

\begin{corollary} \label{riv-unico/cor} \sl For any compact,
oriented, bounded surface $F$, there exists a simple branched
cover $\p_{F} : F \to B^2$, such that any Dehn twist around a
homologically non-trivial curve is the lifting of a
half-twist with respect to $\p_{F}$.
\end{corollary}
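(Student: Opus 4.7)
The strategy is to apply Theorem~\ref{lift-twist/thm} to a carefully chosen finite list of curves, producing a single covering $p_F$ that makes the corresponding Dehn twists liftable as half-twists, and then to propagate the conclusion to every homologically non-trivial simple closed curve by means of a mapping class group orbit argument.

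First, I would fix a finite set of Dehn twist generators $t_{\alpha_1}, \dots, t_{\alpha_m}$ of $\M(F)$ whose supporting curves $\alpha_i$ are homologically non-trivial; the existence of such a generating set is classical, with the $\alpha_i$ taken non-separating whenever the genus is positive. In addition, the homologically non-trivial simple closed curves of $F$ fall into only finitely many $\M(F)$-orbits, parametrised by the topological type of $F$ cut along $\gamma$ (namely, the genera and the distribution of the boundary components of $F$ among the resulting pieces). Pick representatives $\beta_1, \dots, \beta_k$ of these orbits.

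Starting from any simple branched covering $p_0 : F \to B^2$, I would apply Theorem~\ref{lift-twist/thm} successively to the curves in the combined list $\alpha_1, \dots, \alpha_m, \beta_1, \dots, \beta_k$, producing a chain of stabilisations terminating at a covering $p_F : F \to B^2$ with respect to which each $t_{\alpha_i}$ and each $t_{\beta_j}$ lifts to a half-twist in the braid group $\cal B_n$ of $p_F$; let $\sigma_{a_j}$ denote the half-twist lifting $t_{\beta_j}$, supported on an arc $a_j$ joining two branching points of $p_F$. With such a $p_F$ at hand, every $\phi \in \M(F)$ lifts to some braid $k_\phi \in \cal B_n$, being a product of the liftable generators. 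Now for any homologically non-trivial simple closed curve $\gamma \subset F$, the change of coordinates principle produces $\gamma = \phi(\beta_j)$ for some index $j$ and some $\phi \in \M(F)$, whence $t_\gamma = \phi \, t_{\beta_j} \, \phi^{-1}$ lifts to $k_\phi\, \sigma_{a_j}\, k_\phi^{-1} = \sigma_{k_\phi(a_j)}$. Since $k_\phi(a_j)$ is again an embedded arc joining two branching points of $p_F$, this conjugate is once more a half-twist, as required.

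The main difficulty I foresee lies in the iterated-stabilisation step: one has to verify that the stabilisation performed to realise the $(\ell+1)$-st Dehn twist in the list does not spoil the half-twist lifts of the first $\ell$. This should follow from the locality of stabilisations, since each stabilisation only alters the cover over a small disc in $B^2$ that can be chosen disjoint from the arcs realising the previously constructed half-twists; those half-twists, now viewed in the enlarged braid group, then continue to lift to the same Dehn twists. A careful inspection of the algorithm described in the paragraph following Theorem~\ref{lift-twist/thm} should make this precise.
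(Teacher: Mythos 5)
Your proposal follows the paper's overall strategy and is correct: apply Theorem~\ref{lift-twist/thm} to a finite complete set of representatives of the $\cal M(F)$-orbits of homologically non-trivial simple closed curves, stabilising at each step, and then propagate to a general such curve $\gamma$ by writing $t_\gamma = g\,t_{\beta_j}\,g^{-1}$ and using that the conjugate of a half-twist is a half-twist. The one point where you depart from the paper is in showing that the conjugating class $g$ lifts to a braid: the paper starts from a covering $p_0$ of degree at least $3$ and appeals to the surjectivity of the lifting homomorphism $\phi_p$ for $\deg p \geq 3$ (Montesinos--Morton \cite{MM91}), while you instead enlarge the curve list by a finite Dehn-twist generating set of $\cal M(F)$ supported on homologically non-trivial curves, so that $g$ lifts because it is a word in braids already known to be liftable. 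Your variant trades reliance on the Montesinos--Morton surjectivity theorem for extra stabilisations and a choice of generating set (which one must check can indeed be taken with homologically non-trivial supports for every topological type of $F$, including genus zero with several boundary components); the paper's route is more economical, needing only the orbit representatives. The concern you flag about stabilisations spoiling earlier half-twist lifts is genuine but easily resolved, and the paper dispatches it in one line via the natural inclusion $\cal B_{n_k} \hookrightarrow \cal B_{n_i}$: a stabilisation adds a trivial sheet away from the support of the earlier half-twist $u_k$, which therefore continues to lift to $t_{\gamma_k}$.
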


The braids in $\cal B_n$ which are {\sl liftable} with respect to a given branched cover $p$ of $B^2$ form a subgroup $\cal L_p < \cal B_n$. The {\sl lifting homomorphism} $\phi_p : \cal L_p \to \Map(F)$ is onto if $\deg p ³ 3$, as showed by Montesinos and Morton \cite{MM91}. Then \Fullref{riv-unico/cor} implies the following corollary which describes, in terms of the lifting homomorphism, how the branched cover $\p_F$ behaves with respect to Dehn twists.

\begin{corollary} \label{riv-unico2/cor}\sl
The lifting homomorphism $\phi_{\p_F} : \cal L_{\p_F} \to \Map(F)$, induced by the branched cover $\p_F$ of \Fullref{riv-unico/cor}, is onto and sends surjectively liftable half-twists to Dehn twists around homologically non-trivial curves.
\end{corollary}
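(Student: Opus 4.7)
The statement consists of two claims: (i) $\phi_{\p_F}$ is surjective onto $\Map(F)$, and (ii) its restriction to the set of liftable half-twists already hits every Dehn twist around a homologically non-trivial curve. My plan is to obtain both as immediate consequences of results already established in the paper.

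For (i), I will observe that the cover $\p_F$ produced in \Fullref{riv-unico/cor} has degree at least $3$: its construction proceeds through \Fullref{lift-twist/thm}, whose proof is based on stabilizations within the Montesinos--Morton framework, where $d\geq 3$ is required. Surjectivity of $\phi_{\p_F}$ onto $\Map(F)$ will then follow directly from the Montesinos--Morton theorem \cite{MM91} recalled in the paragraph preceding the statement.

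For (ii), I will invoke \Fullref{riv-unico/cor} directly: given any simple closed curve $\gamma\subset F$ with $[\gamma]\ne 0$, the corollary furnishes a half-twist $k\in\cal B_n$ whose lift with respect to $\p_F$ equals $t_\gamma$; unpacking the definition of liftable, $k\in\cal L_{\p_F}$ and $\phi_{\p_F}(k)=t_\gamma$, so every such $t_\gamma$ lies in the image of the set of liftable half-twists. No substantive obstacle arises --- \Fullref{riv-unico2/cor} is essentially \Fullref{riv-unico/cor} recast in the language of the lifting homomorphism. The only interpretive point worth flagging is that ``sends surjectively'' should be read as ``has image containing'' the indicated target set, not as a bijection between liftable half-twists and Dehn twists around homologically non-trivial curves; indeed a liftable half-twist can also lift to the identity, e.g.\ when the annulus cut out by the supporting arc in the preimage has a boundary circle bounding a disc in $F$.
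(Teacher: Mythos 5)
Your proposal is correct and takes the same route the paper implicitly relies on: the paper declines to give a separate proof, stating that the corollary is implicit in Corollary~\ref{riv-unico/cor}, and the text just before the statement supplies the Montesinos--Morton surjectivity fact for $\deg p \geq 3$. You simply make explicit the two observations the paper leaves unstated, and your clarification that ``sends surjectively'' means the image of the liftable half-twists \emph{contains} all Dehn twists around homologically non-trivial curves (rather than a bijection) is the correct reading.
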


Another important consequence of \Fullref{lift-twist/thm} is the following corollary,
which is an improvement of Proposition 2 of Loi and
Piergallini
\cite{LP01}, where they assume that the Lefschetz fibration has fiber with connected
boundary. 

\begin{corollary} \label{LF/cor} \sl Let $V$ be a compact, oriented, smooth 4-manifold,
and
$f : V\to B^2$ be a Lefschetz fibration with regular fiber
$F$, whose boundary is non-empty and not necessarily
connected. Assume that any vanishing cycle is
homologically non-trivial in $F$. Then there is a simple
covering $q : V
\to B^2 \times B^2$, branched over a braided surface, such
that $f = \pi_1 \circ q$, where $\pi_{1}$ is the projection on the first factor $B^{2}$.
\end{corollary}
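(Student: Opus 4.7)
The plan is to adapt the argument of Loi and Piergallini \cite{LP01}, using \Fullref{riv-unico/cor} to eliminate their hypothesis that $\partial F$ be connected. First, apply \Fullref{riv-unico/cor} to fix once and for all a simple branched cover $p_F : F \to B^2$ of degree $d$ with $n$ branching points through which every Dehn twist along a homologically non-trivial curve in $F$ lifts from a half-twist in $\cal B_n$.

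Next, describe the Lefschetz fibration $f$ by its monodromy data. Choose a regular value $z_0 \in B^2$, identify $f^{-1}(z_0)$ with $F$, and let $x_1, \ldots, x_m \in B^2$ be the critical values, with vanishing cycles $\gamma_1, \ldots, \gamma_m$, so that the monodromy of $f$ around $x_i$ is the Dehn twist $t_{\gamma_i}$. Since each $\gamma_i$ is homologically non-trivial by hypothesis, \Fullref{lift-twist/thm} (through the cover $p_F$ provided by \Fullref{riv-unico/cor}) yields half-twists $\beta_i \in \cal B_n$ whose liftings via $p_F$ are the $t_{\gamma_i}$, and in particular each $\beta_i$ lies in $\cal L_{p_F}$.

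Now assemble the sequence $(\beta_1, \ldots, \beta_m)$ into a braided surface $S \subset B^2 \times B^2$ in the sense of Rudolph: over each regular $z \in B^2$ the fiber $S \cap (\{z\} \times B^2)$ is the set of $n$ branching points of $p_F$, and near each $x_i$ the surface $S$ contains a standard band realizing $\beta_i$ as local braid monodromy. Since each $\beta_i$ is liftable, the monodromy representation $\pi_1(B^2 \setminus \{n \text{ points}\}) \to \Sigma_d$ of $p_F$ is compatible with the braid action coming from $S$ and therefore extends to a representation of $\pi_1((B^2 \times B^2) \setminus S) \to \Sigma_d$. This representation determines a simple branched cover $q : V' \to B^2 \times B^2$ branched over $S$, with an induced map $\pi_1 \circ q : V' \to B^2$.

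It remains to identify $V'$ with $V$ over $B^2$. Over any regular $z$ the cover restricts to $p_F$, so the smooth fiber is $F$; near each $x_i$ the branched cover of a 4-ball neighborhood along the half-twist band $\beta_i$ reproduces the standard Lefschetz singularity $z_1^2 + z_2^2 = 0$, with vanishing cycle equal to the lift of the arc underlying $\beta_i$, which is $\gamma_i$ by construction. The monodromies of $V'$ and $V$ around each $x_i$ then agree, so $V' \cong V$ as Lefschetz fibrations over $B^2$ and we may take $q$ as the desired branched covering. The main obstacle is this final local verification at each critical value, which is essentially the analysis carried out by Loi and Piergallini \cite{LP01}; the genuine novelty in this corollary lies only in the earlier choice of $p_F$ afforded by \Fullref{riv-unico/cor}, which is exactly what permits $\partial F$ to be disconnected.
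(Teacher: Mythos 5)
Your proposal follows the same route as the paper: fix the covering $\p_F$ once and for all via Corollary~\ref{riv-unico/cor}, lift each vanishing cycle's Dehn twist to a liftable half-twist $\beta_i \in \cal L_{\p_F}$, assemble the $\beta_i$ into a braided surface in $B^2\times B^2$ by Rudolph's band construction as in Proposition~2 of Loi--Piergallini, endow it with the monodromy inherited from $\p_F$, and then identify the resulting cover $V'$ with $V$ fibrewise. The one small omission is that a Lefschetz fibration may have negative critical points, so the monodromy sequence is $t_1^{\epsilon_1},\dots,t_m^{\epsilon_m}$ with $\epsilon_i=\pm1$ and the bands must realize $\beta_i^{\epsilon_i}$ (with the conjugate local model at negative points), but this does not alter the structure of the argument.
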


Lefschetz fibrations with bounded fibers occur for
instance when considering Lefschetz pencils in closed
4-manifolds, such as those arising in
symplectic geometry, and discovered by
Donaldson \cite{D99}. In fact, given a Lefschetz pencil,
it can be removed a 4-ball around each base point (those at which the fibration is not defined) to
obtain a Lefschetz fibration on $S^2$ whose fiber is a
surface with possibly disconnected boundary. Although in this case the base surface is $S^2$, usually the topology of such Lefschetz fibrations is studied by means of the preimage of a disc in $S^2$ which contains the singular values, in order to obtain a Lefschetz fibration on $B^2$.

The paper is organized as follows. In the next section
we give basic definitions and notations, in \Fullref{proofcor/sec} we prove the corollaries, in \Fullref{diagram/sec} we define the diagrams of curves, their moves and a
lemma needed to get the Representation \Fullref{lift-twist/thm}, which
is then proved in \Fullref{proof/sec}, after some other lemmas. Finally, we state
some remarks, and give some open problems.

\paragraph{Acknowledgements.}
I am grateful to Riccardo Piergallini and Andrea Loi for many helpful conversations. I would also like to thank the anonymous referees for interesting and useful comments.

\section{Preliminaries}\label{prelim/sec}

Throughout the paper, $\Bd M$ denotes the boundary of a manifold $M$, and $\Int M$ its interior. For a topological space $X$, and a subset $Y \subset X$, $\Cl_X Y$ is the closure of $Y$ in $X$. If $X$ is understood, we write $\Cl Y$.

A pair of spaces $(X, Y)$ corresponds to a topological space $X$ with a subspace $Y$. A map of pairs $f : (X_1, Y_1) \to (X_2, Y_2)$ is a continuous map $f : X_1 \to X_2$ such that $f(Y_1) \subset Y_2$. In particular, for a homeomorphism of pairs we have $f(Y_1) = Y_2$.

It is a standard notation to indicate $B^n_r = \{x \in \R^n \, \big\vert \, \Vert x \Vert ² r \}$ for the $n$-ball of radius $r$, and $S^n_r = \Bd B^{n+1}_r = \{x\in \R^{n+1} \,\big\vert\, \Vert x\Vert = r \}$ for the $n$-sphere. If $r = 1$, we will drop it.

Homology groups $H_i(X)$ of a space $X$ are always considered with integer coefficients. Actually, we need only the group $H_1(F)$ of a connected surface $F$. This group is naturally isomorphic to the abelianized of the fundamental group of $F$, so any element $z \in H_1(F)$ can be represented by the homotopy class of a map $S^1 \to F$. For a connected non-singular curve $\gamma \subset F$, the condition $[\gamma] ­ 0$ in $H_1(F)$ of \Fullref{lift-twist/thm} means that $\gamma$ is not the whole boundary of a compact surface contained in $F$. So this condition holds if and only if each component of $F-\gamma$ intersects the boundary of $F$.

In the sequel all manifolds  are assumed to be smooth,
compact, connected, oriented, and all maps proper and
smooth, if not differently stated. Also, when considering
mutually intersecting (immersed) submanifolds, we
generally assume that the intersection is transverse.

\paragraph{Mapping class groups.}
We recall that, given a finite subset $A\subset \Int F$,
the mapping class group
$\cal M(F, A)$ is the group of homeomorphisms $h: (F, A)\to (F, A)$,
fixing the boundary pointwise, up to isotopy through such homeomorphisms. We simply write
$\cal M(F)$ in case $A$ is empty. Of course, if $(F, A)$ is homeomorphic to $(G, B)$, then $\cal M(F, A)$ is isomorphic to $\cal M(G, B)$.

\paragraph{Dehn twists.} Consider a closed curve $\gamma
\subset \Int F - A$ and a closed tubular neighborhood $U$ of $\gamma$ in $F-A$.
Let us choose an orientation-preserving homeomorphism between $U$ and
$S^1\times B^1$
such that
$\gamma$ corresponds to $S^1 \times \{0\}$. Moreover, we will consider $S^1$ as the complexes of modulus one.

The homeomorphism $t : S^1 \times B^1 \to S^1 \times B^1$
with $t(z, y) = (- z e^{y \pi i}, y)$, is
the identity on $\Bd(S^1 \times B^1)$ and so it induces a
homeomorphism of $U$ which can be extended to
$t_\gamma : F \to F$ by the identity outside $U$. So, the isotopy class of
$t_\gamma$ is an element of $\Map(F, A)$ which, by abusing of notation, we
indicate as $t_\gamma$ too. Such mapping class is said a {\sl right-handed
Dehn twist} around
$\gamma$. It turns out that $t_\gamma$, as a class,
depends only on the isotopy class of $\gamma$ in $F-A$.

A right-handed Dehn twist is also said {\sl positive}.
{\sl Left-handed} or {\sl negative Dehn twists} are just those mapping classes whose inverse is a positive Dehn twist. This kind of
positivity depends on the orientation of $F$ (but not on
that of $\gamma$). So, if we reverse the orientation of $F$,
positive Dehn twists become negative and vice versa.

If the curve $\gamma$
bounds a disc which meets $A$ in at most a single point, then the corresponding Dehn
twist is the identity. Otherwise, it can be
showed to be of infinite order in $\Map(F, A)$. The
Dehn twists we are considering are always non-trivial.

It is a standard fact that two Dehn twists $t_{\gamma_1}$ and $t_{\gamma_2}$ are conjugated in $\cal M(F, A)$ if and only if there is a homeomorphism of $(F, A)$, fixing the boundary pointwise, which sends $\gamma_1$ to $\gamma_2$. 

\paragraph{Half-twists.} Let $\alpha\subset \Int F$ be an
embedded arc with end points in $A$, and whose interior part is
disjoint from $A$. Consider a regular neighborhood $V$ of
$\alpha$ in $F - (A - \alpha)$, and choose an orientation preserving
identification $(V, \alpha)\cong (B^2_2, B^1)$. Consider a smooth non-increasing function $\lambda : [0, 2] \to [0, \pi]$ with $\lambda ([0, 1]) = \{\pi\}$ and $\lambda(2) =0$. The
homeomorphism $k : (B^2_2, B^1) \to (B^2_2, B^1)$, given in polar coordinates by $k(\rho,
\theta) = (\rho, \theta + \lambda(\rho))$, is the identity on $\Bd B^2_2$. Then, the induced homeomorphism of $V$ can be extended, by the identity, to
$t_\alpha$ on all of $F$. Note
that $t_\alpha$ sends
$\alpha$ to itself and exchanges its end points, so
$t_\alpha(A) = A$. 

It follows that $t_\alpha$ represents an element of $\Map(F, A)$, which is said to be a {\sl right-handed (or
positive) half-twist}. 

In \Fullref{half-twist/fig} is represented the action of $t_\alpha$ on the two arcs $\sigma_1$ and $\sigma_2$ inside the regular neighborhood $V$. In this figure we see that $t_\alpha(\sigma_1) = \sigma_2$.

\begin{Figure}[htp]{half-twist/fig}{}{}
\centering\includegraphics{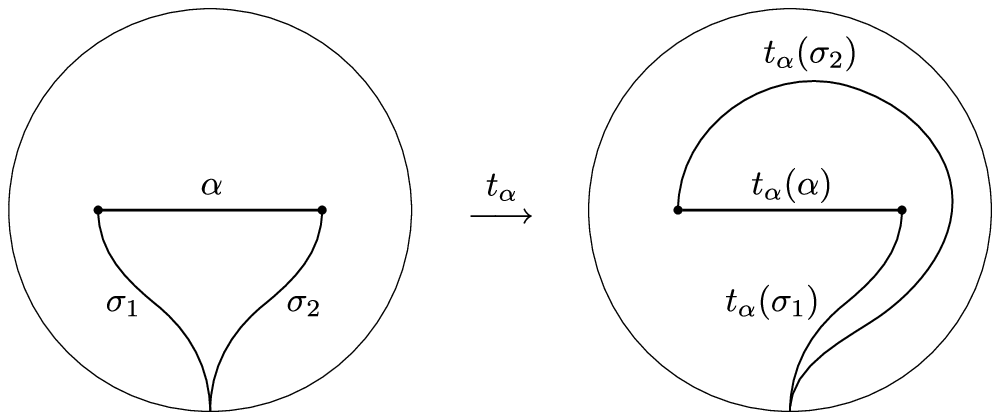}
\end{Figure}

As for Dehn twists, by abusing of notation, we indicate by $t_{\alpha}$ both the homeomorphism and its class in $\Map(F, A)$. Such class $t_\alpha$ depends only on the isotopy class of $\alpha$, relative to $A$. 

A
{\sl left-handed (or negative) half-twist} is an element of $\cal M(F, A)$ whose inverse is a positive half-twist.

Since any two arcs in $F$ are always equivalent up to homeomorphisms of $(F, A)$ that fix the boundary pointwise, it follows that any two half-twists are conjugated in $\cal M(F,A)$.

Dehn twists and half-twists are very important since they (finitely) generate $\cal M(F, A)$. In fact, there are explicit finite presentations of such groups in terms of Dehn twists and half-twists
\cite{B01, W83}.

\paragraph{Braids.} Let us fix two infinite sequences of real numbers $\{a_i\}$ and $\{r_i\}$ such that $0=a_1 <r_1 < a_2 < r_2 < a_3< r_3 <\cdots < 1$, and let $A_n = \{(a_1, 0),\dots, (a_n, 0)\}\subset B^2_{r_n} \subset B^2$. The {\sl braid group of order $n$ (or on $n$ strings)} is defined as $\cal
B_n = \Map(B^2, A_n)$.

So a braid is represented by a homeomorphism of the 2-disc which sends $A_n$ onto itself and leaves the boundary fixed pointwise. In particular, in braid groups there are half-twists around arcs with end points in $A_{n}$.

It is straightforward that the elements of $\cal B_n$ can be represented by homeomorphisms with support in $B^2_{r_n}$. In the sequel we use such representatives in order to compare braid groups of different orders.

It follows that there is a natural inclusion $\cal B_m \subset \cal B_n$ for all $m < n$, because a homeomorphism of $(B^2, A_m)$, with support in $B^2_{r_m}$, is also a homeomorphism of $(B^2, A_{n})$, since $B^2_{r_m} \subset B^2_{r_n}$.

The arc contained in the $x$-axis of $\R^2$ and joining $(a_i, 0)$ with $(a_{i+1}, 0)$ induces a half-twist $\sigma_i \in \cal B_n$, for all $1 ² i < n$. It is well-known that $\cal B_n$ has a standard presentation with generators $\sigma_1, \dots, \sigma_{n-1}$ and relations $\sigma_i \sigma_j = \sigma_j \sigma_i$ for $|i-j| > 1$ and $\sigma_i \sigma_j \sigma_i = \sigma_j \sigma_i \sigma_j$ for $|i-j|=1$, see Birman \cite{B74}. In particular, $\cal B_1$ is the null group and $\cal B_2$ is infinite cyclic. Moreover, $\cal B_n$ is not abelian for all $n ³ 3$.

\begin{remark} Any positive (resp. negative) half-twist in $\cal B_n$ is conjugated to $\sigma_1$ (resp. $\sigma_1^{-1}$).
\end{remark}


\paragraph{Branched coverings.}

A branched covering is a proper smooth
map $p: M \to N$ between 
$n$-manifolds $M$ and $N$, such that:
\begin{enumerate}
\item the singular set $S_p = \{x \in M \, \big | \, \rank (T_x p) < n\}$ coincides with the set of
points at which $p$ is not locally injective, where $T_x p$ is the tangent map of $p$ at $x$;

\item the branching set $B_p=p(S_p)$ is a smooth
embedded codimension two submanifold of $N$;

\item the restriction $p_|: M- p^{-1}(B_p) \to N- B_p$ is
an ordinary covering map.
\end{enumerate}

It is well-known that at singular points, the
branched covering is locally equivalent to the map $B^{n-2}
\times B^2 \to B^{n-2} \times B^2$ with $(x, z) \mapsto (x,
z^m)$, where $m\geq 2$ is the {\sl local
degree}.

We also define the {\sl pseudo-singular set}
$L_p=p^{-1}(B_p)- S_p$. By referring to the local
model, we see that
$L_p$ is closed in $M$.

The {\sl monodromy} of $p$ is that of the associated
ordinary covering $p_| : M-p^{-1}(B_p) \to N-B_p$, so it is
a homomorphism $\omega_p : \pi_1(N-B_p) \to \Sigma_{d}$,
where
$d$ is the degree of $p$ and $\Sigma_d$ is the symmetric group. The choice of a base point
$* \in N- B_p$ and of a numbering of $p^{-1}(*) \cong
\{1, \dots, d\}$ are understood. 

A {\sl meridian} for $B_p$ is a loop in $N-B_p$ which bounds an embedded disc meeting $B_p$ transversely in a single point.

\begin{definition}\sl
We say that
$p$ is simple if $\omega_p$ sends meridians of $B_p$ to
transpositions.
\end{definition}

It is straightforward that $p$ is simple if and only if
$\#\, (p^{-1}(y)) ³ d-1$, $\forall\, y \in N.$ 

\begin{remark} If $p$ is simple, then the local degrees are equal to two, so the local model is $(x, z) \mapsto (x, z^2)$.
\end{remark}

It turns out that $M$ and $p$ are determined, up to
diffeomorphisms, by $N$, $B_p$, and $\omega_p$. This is
achieved by the choice of a {\sl splitting complex}, which
is a compact subcomplex $K\subset N$ of
codimension one, such that $N- K$ is
connected and the monodromy is trivial on
$N-K$, meaning that loops contained in $N-K$ are sended to the identity in $\Sigma_d$ through $\omega_p$. Of course, a splitting complex exists for any
branching set, and we always assume to choose the base point outside $K$. The covering manifold is connected if and
only if the monodromy group
$\omega_p(\pi_1 (N-B_p))$ is transitive on $\{1, \dots,
d\}$. The connected components of $p^{-1}(N-K)$ are said the {\sl sheets} of $p$, and these can be numbered accordingly with the numbering of $p^{-1}(*)$.

\paragraph{Stabilizations of branched coverings.}
Let $p:M \to N$ be a degree $d$ branched cover, with
$\Bd N ­ \emptyset$, and let $Q \subset N$ be a trivially
embedded proper $(n-2)$-ball, unlinked with
$B_p$. We consider the new branched cover
$\widehat p : \widehat M \to N$ of degree $d + 1$, with $B_{\widehat
p}= B_p
\cup Q$, whose monodromy is given by the extension of $\omega_p$ to $\pi_1 (N - B_{\widehat p})$ which
sends a meridian of $Q$ to the transposition $(i\ d+1)$, with $i\in
\{1, \dots, d\}$. It is not hard to see that the
new manifold $\widehat M$ is diffeomorphic to the boundary
connected sum $M \natural N$. In particular, if $N\cong
B^n$, then
$\widehat M\cong M$. 

Such $\widehat p$ is called
a {\sl stabilization} of $p$ and the new sheet added to $p$
is said to be a {\sl trivial sheet}. For a degree $d$ branched cover of $B^{2}$, a stabilization is obtained by the addition of a new branching point with monodromy $(i\ d+1)$.

Now we recall some basic facts about liftable braids.

\begin{definition}\label{lift-braids-def}\sl
Let $p : F \to B^2$ be a simple cover, branched over the set $A_n$, and let $k \in \cal B_n$. The braid $k$ is said to be liftable with respect to $p$ if there is an element $h \in \Map(F)$ such that $p\circ h = k \circ p$. Such $h$ is clearly unique. The set of liftable braids is denoted by $\cal L_p$. The map $\phi_p : \cal L_p \to \cal M(F)$ which sends a liftable braid $k$ to its lifting $h \in \cal M(F)$ is said the lifting homomorphism.
\end{definition}

Of course, such definition involves implicitly suitable representatives, rather than $h$ and $k$ as classes. But is simple to show that liftability of homeomorphisms is invariant under isotopy in $B^2$ relative to the branching set.

It turns out that $\cal L_p$ is a subgroup of $\cal B_n$, and the lifting homomorphism $\phi_p$ is indeed a group homomorphism $\cal L_p \to \cal M(F)$.

Now we state a lifting
criterion, due to Mulazzani and Piergallini \cite{MP01}, in order to better
understand the braids we refer to.
In the sequel, we always assume the base point $*$ to be chosen in $\Bd B^2$.

\begin{proposition}[Lifting criterion] \sl
A braid $k\in \cal B_n$ is liftable with respect to $p$ if and only if
$\omega_p = \omega_p \circ k_*$, where $k_* : \pi_1(B^2-
A_{n}, *) \to \pi_1(B^2- A_{n}, *)$ is the automorphism induced by $k$. In
particular, a half-twist $t_\alpha$ is liftable if and only if $p^{-1}(\alpha)$
contains a closed component $\gamma$. In this case, the lift of $t_\alpha$
is a Dehn twist around $\gamma$. If $t_\alpha$ is not liftable, then either is liftable $t_\alpha^2$ or $t_\alpha^3$, and in both cases the lift is the identity.
\end{proposition}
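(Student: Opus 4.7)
The proof naturally splits into four parts matching the four assertions of the proposition.

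For the main lifting criterion, the plan is to restrict attention to the ordinary unramified covering $p| : F - p^{-1}(A_n) \to B^2 - A_n$. Since $k$ is represented by a homeomorphism of $(B^2, A_n)$ fixing $\Bd B^2$ pointwise, it fixes the base point $* \in \Bd B^2$, and its lift (as an element of $\Map(F)$) must fix the fiber $p^{-1}(*)$ pointwise. By the classical lifting theorem for ordinary covers, the restriction of $k$ admits such a lift if and only if $k_*$ preserves the characteristic subgroup $\ker \omega_p \leq \pi_1(B^2 - A_n, *)$ with its sheet labeling, which amounts to $\omega_p = \omega_p \circ k_*$. This lift then extends across the preimages of $A_n$ via the local branched-cover model $(x, z) \mapsto (x, z^2)$: near each $a_i$, $k$ behaves as a homeomorphism of a small disk sending $a_i$ to some $a_j$, and such a map admits an equivariant lift across the double branched cover, compatible with the global one.

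Specializing to a half-twist $t_\alpha$ with $\alpha$ joining $a_i$ and $a_j$, I would pick meridian generators $\mu_1, \dots, \mu_n$ of $\pi_1(B^2 - A_n, *)$ adapted to $\alpha$ and record the standard Artin formula: $(t_\alpha)_*$ sends $\mu_i$ to a conjugate of $\mu_j$ and $\mu_j$ to a conjugate of $\mu_i$, while fixing the remaining $\mu_k$ up to inner automorphism. Consequently the criterion $\omega_p \circ (t_\alpha)_* = \omega_p$ reduces to $\tau_i = \tau_j$, where $\tau_k = \omega_p(\mu_k)$. This coincides with the geometric condition that $p^{-1}(\alpha)$ contain a closed component: if $\tau_i = \tau_j = (a\ b)$, the two lifts of $\alpha$ in sheets $a$ and $b$ meet at the singular points above $a_i$ and $a_j$, forming a closed curve $\gamma$, while in the other $d-2$ sheets the lifts are open arcs; conversely, a closed component of $p^{-1}(\alpha)$ can only arise in this way.

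To identify the lift as $t_\gamma$, I would take a small regular neighborhood $V$ of $\alpha$ in $B^2$ containing no branching points other than $a_i, a_j$. The restriction of $p$ over $V$ in sheets $\{a, b\}$ is the double cover of a disk branched at two interior points, i.e., an annulus $U$ with core circle $\gamma$. The explicit polar formula for $t_\alpha$ given in the Preliminaries, together with the local model $z \mapsto z^2$ at the two branch points, shows that the induced map on $U$ is a full annular twist, namely the Dehn twist $t_\gamma$; on each of the remaining sheets the lift is supported on a disk and is isotopic to the identity rel boundary by Alexander's trick.

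For the final assertion, when $\tau_i \neq \tau_j$ the two transpositions either are disjoint or share exactly one element. In the disjoint case they commute, and the formula $(t_\alpha^2)_*(\mu_k) = (\mu_i\mu_j)\mu_k(\mu_i\mu_j)^{-1}$ for $k = i, j$ (together with fixing the other meridians) gives $\omega_p \circ (t_\alpha^2)_* = \omega_p$, since $\tau_i \tau_j$ centralizes both $\tau_i$ and $\tau_j$. In the share-one case $\tau_i \tau_j$ is a $3$-cycle, so $(\tau_i \tau_j)^3 = e$, and an analogous computation for the iterated Artin action verifies the criterion for $t_\alpha^3$. In both cases $p^{-1}(\alpha)$ has no closed component, so an Euler-characteristic calculation shows that the preimage of the support disk of the relevant power of $t_\alpha$ splits as a disjoint union of topological disks, each carrying at most one branch point; the lift fixes the boundary of every such disk pointwise, and Alexander's trick forces it to be isotopic to the identity. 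This final identification, resting on the explicit structure of the branched cover over the support of the twist, is the main technical step of the proof.
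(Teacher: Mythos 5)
The paper does not prove this proposition --- it is quoted directly from Mulazzani and Piergallini [MP01], so there is no internal argument to compare against. That said, your sketch is essentially the standard proof of this result and appears correct: restricting to the unramified part and applying the covering-space lifting criterion (with the observation that a braid fixes $\Bd B^2$ pointwise, so a lift must fix the fibre over $*$ pointwise), extending across the branch locus via the local model $z\mapsto z^2$; Artin's action of a half-twist reducing liftability to $\tau_i=\tau_j$ for meridians adapted to $\alpha$, which matches exactly the condition that $p^{-1}(\alpha)$ close up to a circle $\gamma$ in the two relevant sheets; the annular double cover of a neighbourhood $V$ of $\alpha$ identifying the lift with $t_\gamma$, with Alexander's trick handling the trivial sheets; and the disjoint-versus-shared-letter dichotomy forcing $t_\alpha^2$ or $t_\alpha^3$ to be liftable with trivial lift. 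One small slip in the last step: in the shared-letter case the relevant component of the preimage of the support disc is a connected three-fold cover branched at \emph{two} points (Euler characteristic $3-2=1$, one boundary circle since $\tau_i\tau_j$ is a $3$-cycle), so ``each carrying at most one branch point'' is not accurate; the conclusion that every component is a disc, and hence the lift is isotopically trivial by Alexander's trick, nevertheless stands.
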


\begin{remark} The normal closure of $\cal L_p$ is the whole $\cal B_n$, since there are liftable half-twists, and so the normal closure of $\cal L_p$ contains the standard generators $\sigma_1$, $\dots$, $\sigma_{n-1}$.
\end{remark}

The following definition is needed in \Fullref{LF/cor}.

\begin{definition}[Rudolph \cite{R85}]\label{braided/def} \sl A braided surface $S \subset B^{2} \times B^{2}$ is a smooth surface such that the projection on the first factor ${\pi_{1}}_{|S} : S \to B^{2}$ is a simple branched covering.
\end{definition}

\paragraph{Lefschetz fibrations.} A Lefschetz fibration is a not necessarily proper smooth map $f : V^{4} \to S$ from a 4-manifold $V^{4}$ to a surface $S$, such that the restriction to its singular set $A \subset \Int V$ is injective, the restriction $f_{|} : V - f^{-1}(f(A)) \to S - 
f(A)$ is a locally trivial oriented bundle, and for each point $a \in A$, there are local complex coordinates $(z, w)$ around $a$, and a local complex orientation-preserving coordinate around $f(a)$, such that $f(z, w) = z w$.

It follows that the singular set $A$ is discrete, and hence it is finite. If the coordinates $(z, w)$ are orientation preserving on $V$, the point $a$ is said {\sl positive}, otherwise it is {\sl negative}. The monodromy of a meridian of a singular value $f(a)$ is a Dehn twist around a curve in the (oriented) regular fiber $F$. This curve is said to be a {\sl vanishing cycle}, and the corresponding Dehn twist is right-handed (resp. left-handed) if and only if the singular point $a$ is positive (resp. negative). A Lefschetz fibration is {\sl allowable} if and only if every vanishing cycle is homologically non-trivial in $F$. Generalities on this subject can be found on \cite{GS99}.

\section{Proofs of corollaries} \label{proofcor/sec}

In this section we prove Corollaries \ref{riv-unico/cor} and \ref{LF/cor} by assuming \Fullref{lift-twist/thm}. \Fullref{riv-unico2/cor} does not need a proof, since it is implicit in \Fullref{riv-unico/cor}.

\begin{proof}[\Fullref{riv-unico/cor}]
Recall that two connected curves $\gamma_1$ and $\gamma_2$ in $F$ are said to be {\sl equivalent} if there is a diffeomorphism $g : F \to F$,
fixing the boundary pointwise, such that $g(\gamma_1) =
\gamma_2$. If both $\gamma_1$ and
$\gamma_2$ do not disconnect, then they are equivalent,
see Chapter 12 of
\cite{L97}. Otherwise, they are equivalent if and only if their
complements are diffeomorphic (of course this
diffeomorphism must be the identity on the boundary). This implies that the set of equivalence
classes of curves is finite.

Let $\{\gamma_1, \dots, \gamma_m \}$ be a
complete set of homologically non-trivial representatives
of such equivalence classes.

We now construct a sequence of branched coverings, by
induction. Start from a simple branched covering
$p_0 : F
\to B^2$ of degree at least 3, and let $p_i$, for $i = 1, \dots , m$, be the branched covering obtained from
$p_{i-1}$ by \Fullref{lift-twist/thm} (and its proof), applied to
$t_{\gamma_{i}}$. Therefore, $t_{\gamma_{i}}$ is the lifting of a half-twist
$u_i$, with respect to $p_i$. Since $p_i$ is obtained
from $p_{i-1}$ by stabilizations, it follows that
$u_{k}$, for $k < i$, still lifts to
$t_{\gamma_{k}}$, with respect to $p_i$ (the obvious embedding
$\cal B_{n_{k}} \hookrightarrow \cal B_{n_i}$ is understood). Then each
$t_{\gamma_{i}}$ is the lifting of the corresponding $u_i$ with respect to $p_m$, and let $\p_{F} = p_{m}$.

Any other Dehn twist $t_\gamma$, along a homologically
non-trivial curve, is conjugated to some $t_{\gamma_i}$, where $\gamma_i$ is the representative of the equivalence class of $\gamma$,
so
$t_\gamma = g\, t_{\gamma_i}\, g^{-1}$, for some $g \in \cal
M(F)$. Since
$\deg(\p_F) \geq 3$, it follows that $g$ is the lifting of a braid
$k \in \cal B_n$, see \cite{MM91}. Observing that the conjugated of a half-twist
is also a half-twist, it follows that $t_\gamma$ is the lifting, with respect to $\p_{F}$,
of the half-twist
$k \, u_i \, k^{-1}$.
\end{proof}

\begin{proof}[\Fullref{LF/cor}]
First, we observe that $f$ is determined, up to isotopy, by the regular fiber and
the monodromy sequence $t_1^{\epsilon_1}, \dots ,
t_n^{\epsilon_n}$, where $t_i$ is a Dehn twist along a
homologically non-trivial curve, and $\epsilon_i = \pm 1$.

Let $\p_{F}$ be the branched covering of \Fullref{riv-unico/cor}, and let $A_n \subset B^2$ be the branching set of $\p_F$, for some integer $n$. Each $t_{i}$ is the lifting, with respect to $\p_{F}$, of a half-twist $u_{i} \in \cal B_n$.

Since now, the proof is identical to that of Proposition 2 in \cite{LP01} to which the reader is referred for any detail. Here we only say that, roughly speaking, the branching surface is constructed starting from the discs $B^2 \times A_n \subset B^2 \times B^2$, which are connected by geometric bands obtained from the half-twists $u_i^{\epsilon_i}$ (for band representations see Rudolph \cite{R83}). Such surface inherits a monodromy from that of $\p_F$, through the discs defined above. So, we get a simple smooth branched cover $q : V' \to B^2 \times B^2$. 

The proof is completed by observing that $V'$ is diffeomorphic to $V$, and the map $\pi_1 \circ q$ is equivalent to $f$ through such diffeomorphism.
\end{proof}

\section{Diagrams and moves} \label{diagram/sec}

Let us consider a simple branched cover
$p:F
\to B^2$ of degree 
$d$, along with a closed connected curve
$\gamma\subset \Int F$. By choosing a splitting complex
$K$, we get the sheets of $p$, labelled by the
set
$\{1,\dots, d\,\}$.

If not differently stated, the splitting
complexes we refer to, are disjoint unions of arcs which
connect the branching points with
$\Bd B^2$. Of course, $p$ can be presented by the
splitting complex, to each arc of which is attached a
transposition which is the monodromy of a loop
around that arc.

Throughout the paper we represent $B^{2}$ by a rectangle, and the base point is always chosen in the lower left corner.

Generically, the map $p_| : \gamma \to B^2$ is an
immersion, and its image $C=p(\gamma)\subset B^2$ has
only transverse double points as singularities. 
To each smooth arc in $C - K$ we can associate a {\sl label}, namely the number of the sheet at which the corresponding arc of $\gamma$ stays (respect to an arbitrary numbering of the sheets). 

\begin{definition}\sl
Such labelled immersed curve is said to be the diagram of $\gamma$. It is also a diagram for the Dehn twist $t_\gamma$.
\end{definition}

On the other hand, $\gamma$ can be uniquely recovered from a labelled diagram as the unique lifting starting from the sheet specified by the labels. Of course, this makes sense if and only if the labels of $C$ satisfy the following compatibility conditions:

\begin{enumerate}
\item The label of a smooth arc of $C$ changes from $l$ to $\mu(l)$ when crossing an arc of $K$ with monodromy transposition $\mu$.

\item Two smooth arcs of $C - K$, whose intersection is also an arc, must have the same label.

\item Two smooth open arcs of $C$, whose intersection is a single point, cannot have the same label.
\end{enumerate}

Conditions (i) implies continuity at intersections with $K$. Condition (ii) implies continuity outside $K$, and (iii) guarantees that the lifting is an embedded curve.

\begin{remark}\label{interval-diagram/rmk} Let $t_\alpha\in \cal B_n$ be a liftable half-twist which lifts to a Dehn twist $t_\gamma$.  The diagram of
$t_\gamma$ is the boundary of a
regular neighborhood of $\alpha$ in $B^2 - (B_p - \alpha)$, compatibly labelled with the sheets numbers involved in the monodromy of the end points of $\alpha$, as drawn in the right part of \Fullref{cover/fig}. In fact near $\gamma$, $p$ is equivalent to the simple double branched cover $S^1 \times B^1 \to B^2$, induced by the involution of $S^1 \times B^1$ given by the $180^\circ$-rotation of \Fullref{cover/fig} (the quotient space is homeomorphic to $B^2$, and the branched covering is the projection map). In this figure, $B^2$ is depicted as a capped cylinder, and clearly $\gamma$ projects exactly to the thick curve.
\end{remark}

\begin{Figure}[htp]{cover/fig}{}{}
\centering\includegraphics{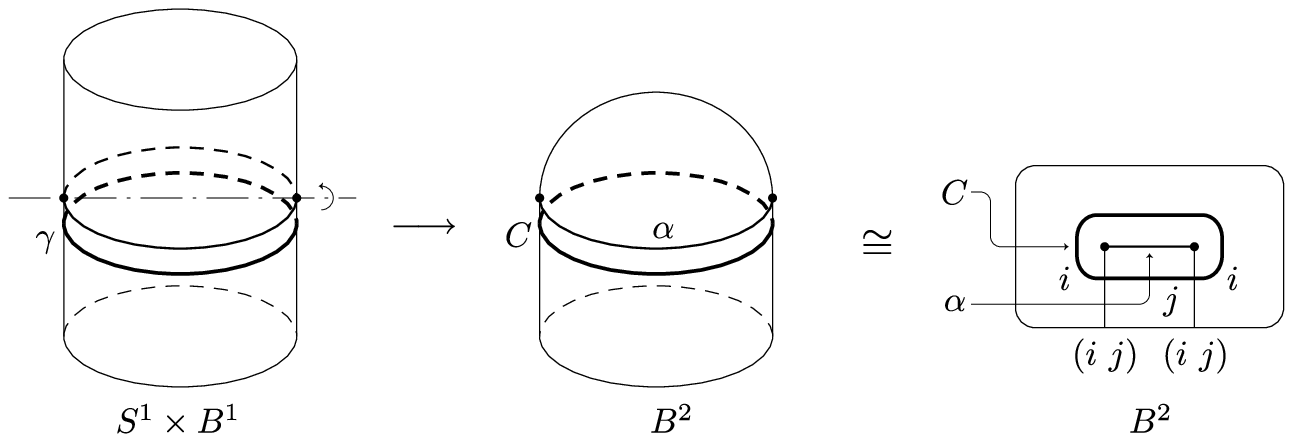}
\end{Figure}

It is not hard to show that two diagrams of the same Dehn
twist are related by the local moves $\cal T_{1}$, $\cal T_{2}$, $\cal T_{3}$ and $\cal T_{4}$ of
\Fullref{moves/fig}, their inverses, and isotopy in $B^2 -
B_p$ ($i$, $j$ and $k$ in that figure are pairwise
distinct). In fact the moves correspond to critical levels
of the projection in
$B^2$ of a generic isotopy of a curve in
$F$. In $\cal T_1$ the isotopy goes through a singular
point of $p$, while in $\cal T_2$ it goes through a
pseudo-singular point (a regular point with image a singular value).

\begin{Figure}[htp]{moves/fig}{}{}
\centering\includegraphics{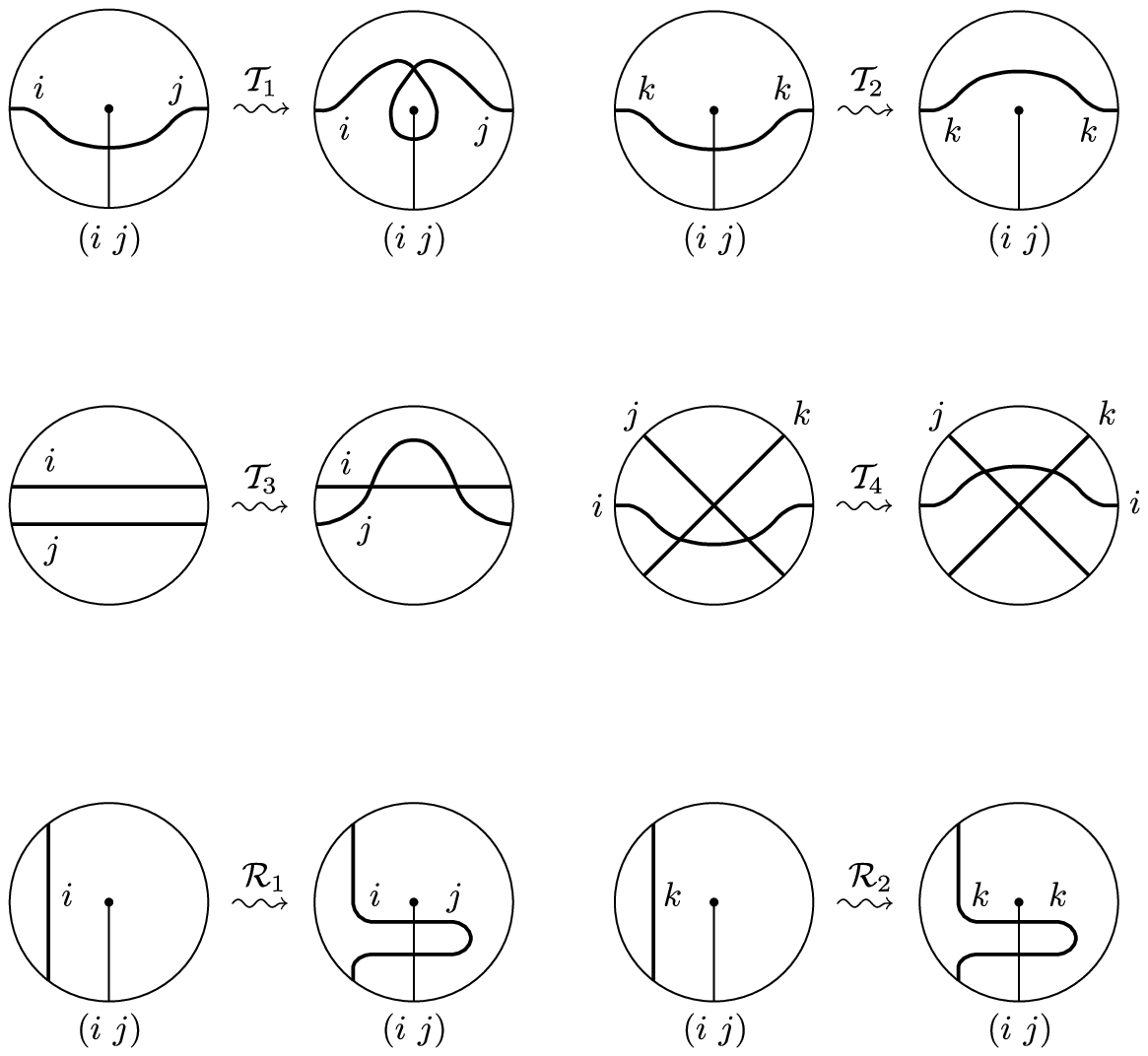}
\end{Figure}

To be more explicit, we will use also the moves $\cal R_1$
and
$\cal R_2$ of
\Fullref{moves/fig}, which represent the so called {\sl
labelled isotopy}. In this way, the diagrams of isotopic
curves in $F$ are related by moves $\cal T_i$, $\cal R_i$
and isotopy in $B^2$ leaving $K$ invariant. Of course, only the moves
$\cal T_1$, $\cal T_3$ and $\cal T_4$ change the topology of the diagram.

\paragraph{Classification of moves.}
By considering the action of the moves on a diagram $C$, we get the following classification of them.
The moves $\cal T_{2}$, $\cal R_{1}$, and $\cal R_{2}$ represent isotopy of $C$ in $B^{2}$, liftable to isotopy of $\gamma$ in $F$. The previous ones with $\cal T_{3}$ and $\cal T_{4}$ give regular homotopy of $C$ in $B^{2}$, liftable to isotopy. Finally, all the moves give homotopy in $B^{2}$, liftable to isotopy. Moreover, the unlabelled versions of the moves give us  respectively isotopy, regular homotopy, and homotopy in $B^{2}$. In \Fullref{proof/sec} we will see how to realise a homotopy in $B^{2}$ as a homotopy liftable to isotopy, by the addition of trivial sheets. We will use the argument to transform a singular diagram into a regular one.

\begin{definition} \label{separation/def} \sl Two subsets $J$,
$L \subset B^2$ are said to be separated if and only if there exists
a properly embedded arc $a\subset B^2 - (J
\cup L)$, such that $\Cl J$ and $\Cl L$ are contained in
different components of $B^2 - a$.
\end{definition}


\paragraph{Notations.} For a diagram $C$, a
non-singular point
$y \in C-K$, and a set $D\subset B^2$:

\begin{itemize}
\item $\lambda(y)$ is the label of $y$;

\item $\Sing(C)$ is the set of singular points of $C$;

\item $\sigma(C) = \#\, \Sing(C)$;

\item $\beta(D) = \#\, (B_p \cap D)$.
\end{itemize}

\begin{lemma} \label{regular-arc/lem} \sl Let $p: F \to B^2$ 
be a simple connected branched covering, and let $x$, $y \in \Bd
F$ such that $p(x) \ne p(y)$. There exists a properly embedded
arc
$a \subset F$ whose end points are $x$ and $y$, such that $p_{| a}$ is one to one.
\end{lemma}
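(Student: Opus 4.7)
The plan is to use the connectedness of $F$ to produce an auxiliary path from $x$ to $y$, and then to replace its projection in $B^2$ by an embedded arc whose lift still joins $x$ to $y$. Concretely, since $F$ is connected there is a continuous path $\psi : [0,1] \to F$ with $\psi(0) = x$ and $\psi(1) = y$, and a small perturbation makes $\psi$ smooth with interior avoiding the discrete singular set $S_p$ of $p$. The projection $\alpha = p \circ \psi$ is then a smooth path in $B^2 - B_p$ from $p(x)$ to $p(y)$.

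The second step is to replace $\alpha$ by an embedded arc $\alpha' \subset B^2 - B_p$ homotopic to $\alpha$ relative to its endpoints. The surface $B^2 - B_p$ is a disc with finitely many interior punctures, and a standard fact in surface topology asserts that every homotopy class of arcs between two fixed boundary points contains a simple (embedded) representative. Once $\alpha'$ is produced, the homotopy lifting property of the ordinary covering $p_{|} : F - p^{-1}(B_p) \to B^2 - B_p$ guarantees that the lift $a$ of $\alpha'$ starting at $x$ also ends at $y$; since $\alpha'$ is embedded and $p_{|}$ is a local diffeomorphism, $a$ is embedded with $p_{| a} = \alpha'$ injective, and $a \cap \Bd F = \{x, y\}$, yielding the required properly embedded arc.

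The main obstacle is producing the embedded representative $\alpha'$ in the prescribed homotopy class. One clean route equips $B^2 - B_p$ with a hyperbolic metric with totally geodesic boundary, so that each homotopy class contains a unique geodesic representative, automatically embedded. A more constructive alternative, in the spirit of the paper's splitting-complex machinery, proceeds as follows: connectedness of $F$ implies connectedness of the graph whose vertices are the sheets of $p$ (determined by a splitting complex $K$) and whose edges are the arcs of $K$ (weighted by their monodromy transpositions). Choose a simple path of sheets from the one containing $x$ to the one containing $y$, and build $a$ piece by piece as arcs within each sheet, crossing the appropriate arc of $K$ at a single point. Positioning these crossings close enough to $\Bd B^2$ realises a non-crossing planar matching in the disc $B^2 - K$, so the projected pieces assemble into an embedded arc in $B^2$, giving $p_{| a}$ injective as required.
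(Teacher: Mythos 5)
Both of your routes have a gap at the crux, which is precisely where the content of the lemma sits.

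The first route rests on the claim that every rel-endpoints homotopy class of arcs between two boundary points of a multiply punctured disc has an embedded representative, and this is false. Already in $B^2 - \{0\}$ with $p(x)=1$ and $p(y)=-1$, the rel-endpoints classes of arcs form a $\Z$-torsor under winding around the puncture, whereas an embedded arc separates $B^2$ and leaves the puncture on one side, so only two of those classes have simple representatives. For a winding class the hyperbolic geodesic realizes the \emph{minimal} self-intersection number, which is positive, by the Hass--Scott results cited in the paper's bibliography; it cannot straighten the arc. Since $\alpha = p\circ\psi$ for an arbitrary arc $\psi\subset F$ may well land in a winding class, your argument does not produce $\alpha'$; and you cannot simply substitute any other embedded arc from $p(x)$ to $p(y)$, because the lift must start at $x$ and end at $y$ --- which is exactly what the lemma has to arrange.

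Your second route is much closer to what the paper actually does: both use the splitting complex $K$ and the connectedness of the sheet graph (equivalent to transitivity of the monodromy). But the key step --- that pushing the crossings to $\Bd B^2$ produces a non-crossing planar matching --- is unsupported. A simple path $i_0\to i_1\to\cdots\to i_m$ in the sheet graph forces the arc to cross the corresponding arcs of $K$ in that exact order (crossing them out of order changes the sheet transport, and the lift no longer ends at $y$), and that linear order need not be compatible with the cyclic order of the attachment points on $S^1$: after cutting $B^2$ open along $K$, the required chords can interleave. The paper removes this obstruction up front. It chooses $K$ so that $p(x)$ and $p(y)$ bound an arc of $S^1$ disjoint from $K$, deletes regular neighborhoods of $n-d+1$ arcs of $K$ (the complement of a spanning tree of the sheet graph) so that the restricted cover $p':B^2\to B^2$ has degree $d$ with $d-1$ branch points, and then invokes the classification of simple branched covers of $B^2$ by $B^2$ to normalize the monodromies to $(1\ 2),\dots,(d-1\ d)$, in which standard picture the desired arc can simply be read off. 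Some normalization of $K$ of this kind is exactly the ingredient missing from your second route.
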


\begin{proof} We choose the splitting complex $K$ in such a
way that
$p(x)$ and $p(y)$ are the end points of an arc in $S^1$
disjoint from
$K$. By our convention, $K = a_1 \sqcup
\cdots \sqcup a_n$, where the $a_j$'s are arcs. If we
remove a regular open neighborhood of a suitable subset
$a_{i_1}
\sqcup
\cdots
\sqcup a_{i_{n-d+1}}$, we obtain a new branched covering
$p' : B^2 \to B^2$, which is contained in $p$ (such $a_{i_j}$'s are chosen to kill the essential handles of $F$, in order to get $B^2$).

By the well known classification of simple branched
coverings  $B^2 \to B^2$ (see for instance
\cite{MP01}), we can assume that the monodromies are
$(1\ 2), \dots$, $(d-1 \ d)$ as in
\Fullref{rivestimento-std/fig} (where only the relevant part
is depicted). Look at the same figure to get the required
arc, where
$i$ and
$j$ are the leaves at which $x$ and $y$ stay.
\end{proof}

\begin{Figure}[htp]{rivestimento-std/fig}{}{}
\centering\includegraphics{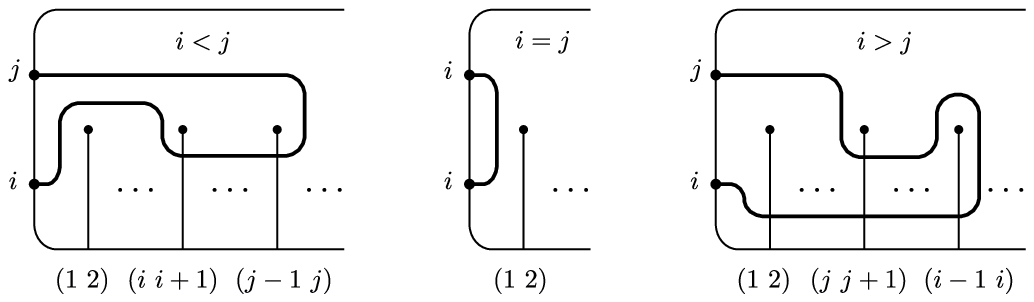}
\end{Figure}

\section{Proof of
\Fullref{lift-twist/thm}}\label{proof/sec}
Let us consider a diagram $C\subset B^{2}$ of a closed simple curve $\gamma \subset F$. We first deal with the `only if\/' part, which is immediate, then the rest of the section is dedicated to the `if\/' part.

\paragraph{`Only if\/'.} If we start from a half-twist $t_{\alpha}$ whose lifting is the given Dehn twist $t_{\gamma}$, we can easily get a proper arc $\beta \subset B^{2}$ which transversely meets $\alpha$ in a single point. Then a suitable lift of $\beta$ gives an arc $\tilde\beta \subset F$ which intersects $\gamma$ in a single point. It follows that the homological intersection of $[\gamma] \in H_{1}(F)$ with $[\tilde\beta] \in H_{1}(F, \Bd F)$ is non-trivial in $H_{0}(F) \cong \Z$ (orientations may be chosen arbitrarily, otherwise use $\Z_{2}$-coefficients). So we have $[\gamma] \neq 0$ in $H_{1}(F)$.

\paragraph{Getting the half-twist.} Let us prove the `if\/' part. We will consider three cases. In the first one, we deal with a non-singular diagram, and we get the half-twist with a single stabilization. In the subsequent cases we will progressively adapt that argument to arbitrary diagrams.

\paragraph{Case 1.} Suppose that $\sigma(C) = 0$, which means that $C$ is a Jordan curve in $B^{2}$. 

In the example of \Fullref{esempio1/fig} we have only a particular case, but this is useful to give a concrete illustration of our method.

Let $D$ be the disc in $B^2$ bounded by $C$. If $D$ contains exactly two branching points, then the component of the preimage of $D$ containing $\gamma$, is a tubular neighborhood of $\gamma$ itself, and the half-twist we are looking for is precisely that around an arc in $D$ joining the two branching points, see \Fullref{interval-diagram/rmk}. Otherwise, if there are more branching points, so $\beta(D) > 2$, then we will reduce them. Of course $\beta(D)$ cannot be less than two, because $[\gamma] \neq 0$.

So let $\beta(D) > 2$.
We can also assume $\beta(D)$ to be minimal up to moves
$\cal T_2$ (look at the pseudo-singular points in the preimage 
$p^{-1}(D)$ in order to get the paths suitable for moves $\cal T_{2}$). 

Let
$s$ be an arc with an end point $a\in C$ and the other, say it $b$, is in the exterior of $C$, such that 
$s\cap D$ is an arc determining a subdisc of $D$ which contains exactly one branching point. Now, by extending the label 
$\lambda(a)$ inherited from $C$ to all of $s$, we get a label $l= \lambda(b)$. The assumptions above imply that the label of $s$ at $\Int s \cap C$ is different from that of $C$, see 
\Fullref{esempio1/fig}~\(a).

We can now stabilize the covering by the addition of the branching point $b$ with monodromy $(l \ d+1)$. With a move $\cal T_{2}$ along $s$
the curve $C$ goes through $b$ as in \Fullref{esempio1/fig}~\(b), so the new branching point goes to the interior of the diagram.

Now we isotope $C$ along $s$ starting from $a$. As we approach to $b$, the label of $C$ becomes $l$ (1 in the example) because the labels of $C$
and $s$ coincide during the isotopy. Notice that they are subject to the same permutation of $\{1, \dots, d\}$. Then we can turn around the branching point $b$ to get an arc of $C$ with label $d+1$ (we have to turn in the direction determined by the component of $D - (s \cup k_{b})$ containing the branching points we have to eliminate, where $k_{b}$ is the new splitting arc relative to $b$).

In fact we can now eliminate from $D$ the exceding branching points as in \Fullref{esempio1/fig}~\(c) by some subsequent applications of move $\cal T_{2}$. We obtain a diagram containing only two branching points in its interior, and then we get the half-twist as said above. In the example we get the half-twist around the thick arc in \Fullref{esempio1/fig}~\(d).

\begin{Figure}[htp]{esempio1/fig}{}{}
\centering\includegraphics{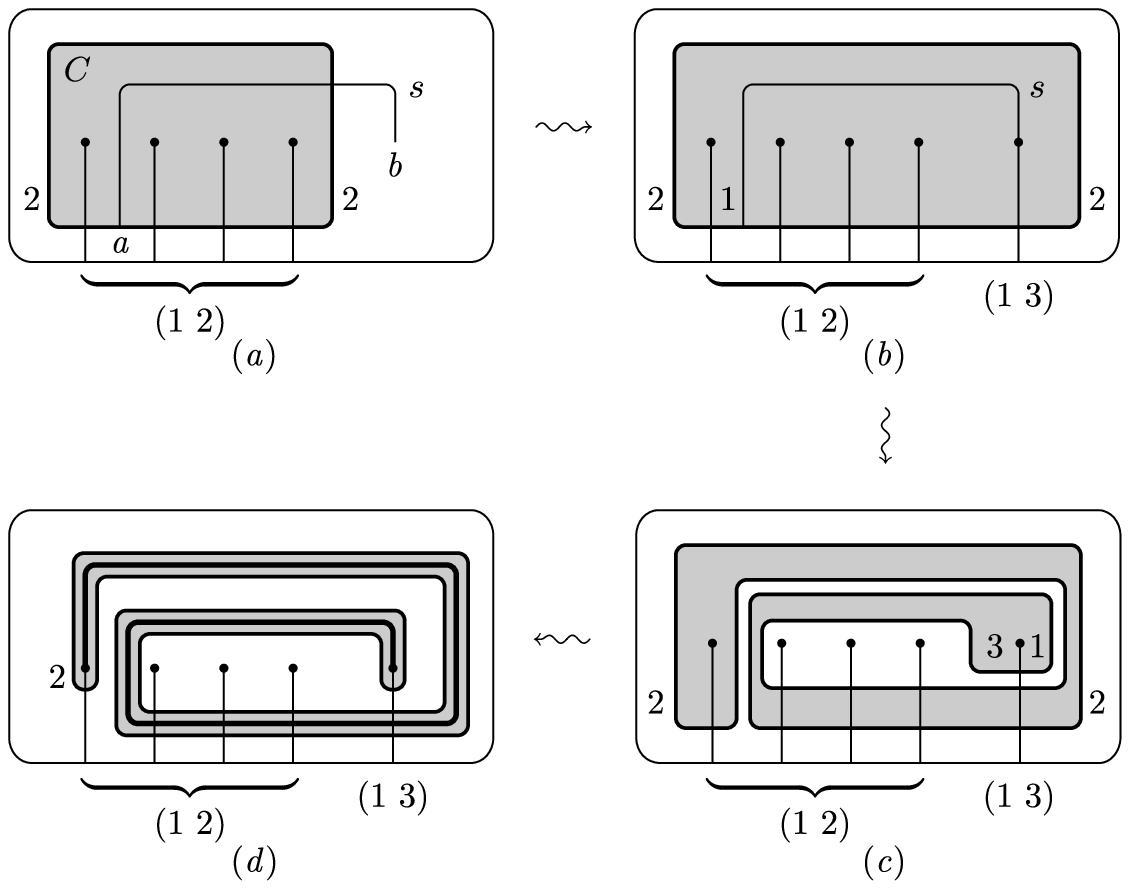}
\end{Figure}

\paragraph{Case 2.} Suppose that $\sigma(C)³1$ and that for each point $\tilde a \in \gamma$
there is a proper embedded arc
$\tilde s\subset F$, such that $\tilde s \cap \gamma = \{\tilde a\}$ (the intersection is understood to be transverse), and that $p_{|\tilde s}$ is one to one on both the subarcs $\tilde s_1$ and $\tilde s_2$ determined by $\tilde a$ (so $\tilde s_i$'s are the closures of the components of $\tilde s - \tilde a$). Then, said $s$, $s_{1}$ and $s_{2}$ respectively the images of $\tilde s$, $\tilde s_1$ and $\tilde s_2$, we have that the $s_{i}$'s are embedded arcs in $B^{2}$, and that the point $a = p(\tilde a)$ is the only one at which $C$ and $s$ intersect with the same label.

Consider a
disc
$D\subset B^2$ such that
$\Bd D\subset C$ and $\Int D\cap C =\emptyset$. Such a disc is an $n$-gone, where $n = \#\, (\Sing(C) \cap D)$. Let us choose the arc $\tilde s$ in such a way that the point $a$ defined above is in the boundary of $D$. Then one of the two subarcs of $s$, say
$s_1$, is going inside $D$ at $a$ (so $D \cap s_{1}$ is a neighborhood of $a$ in $s_{1}$). The disc $D$ may contain branching points but, as we see later, we need a disc without them. The next two lemmas give us a way to get outside of $D$ these branching points. Now we assume that $\beta(D) \geq 1$, otherwise we leave $C$ and $s$ unchanged.

\begin{lemma} \label{arco/lem} \sl If $\beta(D)$
is minimal with respect to moves $\cal T_2$, then,
starting from $s_1$, we can construct an arc $s'_1$ with the
same labelled end points of $s_1$, such that
$s'_1\cap D$ is an arc.
\end{lemma}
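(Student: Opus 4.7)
The plan is to induct on the number $k$ of connected components of $s_1 \cap D$. When $k = 1$ there is nothing to prove; assume $k \geq 2$ and that the result holds for smaller values. I will construct an isotopy of $\tilde s_1$ in $F$, relative to its endpoints, whose projection reduces $k$ by one.

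Among the components of $s_1 \cap D$ distinct from the one containing $a$, pick an \emph{outermost} one: an arc $\alpha$ whose endpoints $x, y \in \Bd D$ cobound, together with a subarc $\delta \subset \Bd D$, a sub-disc $D_\alpha \subset D$ whose interior is disjoint from $s_1 \cap D$. Existence of such an $\alpha$ follows from a standard outermost-arc argument for arcs properly embedded in the disc $D$.

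The key claim is that $\beta(D_\alpha) = 0$. Suppose, for contradiction, that some branching point $b \in D_\alpha$ exists, chosen closest to $\delta$. A local $\cal T_2$-move on the diagram $C$, pushing the segment of $\delta$ adjacent to $b$ across $b$ into $D_\alpha$, leaves $b$ outside the new face and strictly decreases $\beta(D)$ by one, contradicting the minimality hypothesis. Hence $\beta(D_\alpha) = 0$, and $p^{-1}(D_\alpha)$ is a disjoint union of $d$ homeomorphic copies of $D_\alpha$, one per sheet. The lift $\tilde\alpha \subset \tilde s_1$ lies in exactly one such copy $D_\alpha^{(i)}$, and can be isotoped across it and slightly past the corresponding lift of $\delta$ in sheet $i$. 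Since $a$ is by hypothesis the only point at which $s$ and $C$ share a label, the labels of $\alpha$ and $\delta$ differ; hence the lift of $\delta$ in sheet $i$ does not lie on $\gamma$, and the isotopy is unobstructed. The modified arc $\tilde s'_1$ is still embedded and injectively projected, with the same labelled end points as $\tilde s_1$; its projection $s'_1$ satisfies $s'_1 \cap D = (s_1 \cap D) \setminus \alpha$, so the inductive hypothesis completes the argument.

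The main obstacle is the verification of $\beta(D_\alpha) = 0$: one must turn the intuitive picture of ``pushing $\delta$ across $b$'' into an honest labelled $\cal T_2$-move, checking that the label carried by $\delta$ near $b$ is compatible with the monodromy transposition at $b$ so that the move produces a valid labelled diagram of the same curve $\gamma$, and that it strictly decreases $\beta(D)$ without introducing new pseudo-singularities elsewhere. Some care about the choice of splitting complex $K$ near $b$ and $\delta$ will likely be needed to make this local move genuinely of type $\cal T_2$.
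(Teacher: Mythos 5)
Your plan inverts what the minimality hypothesis actually controls, and the key claim $\beta(D_\alpha)=0$ does not follow from it.

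The minimality of $\beta(D)$ under $\cal T_2$-moves says only that you cannot push arcs of the diagram $C$ across any branching point of $D$ while staying a valid labelled diagram. A move $\cal T_2$ across a branching point $b$ with monodromy $(i\ j)$ is permissible only if the label that $C$ carries at $b$ (after being transported along your path $r$ from $\delta$ to $b$) is different from both $i$ and $j$. You correctly flag this as "the main obstacle," but it is not a technicality to be checked later: it is false in general. What the minimality hypothesis actually forces (and this is the first thing the paper establishes) is that no component of $S=p^{-1}(D)$ meets both $\gamma$ and the pseudo-singular set; from this one deduces that the component $S_0$ containing $\tilde a$ contains \emph{all} singular preimages of the branching points in $D$, while the remaining components are trivial discs carrying only pseudo-singular points. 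Consequently, for a branching point $b\in D_\alpha$ the part of $\gamma$ near $\Bd D$ that lies in $S_0$ has a label belonging to the monodromy pair of $b$, and the $\cal T_2$-move you want to perform is precisely the one that is forbidden. So $\beta(D_\alpha)$ can be strictly positive without violating minimality, and your inductive step breaks down at its very first ingredient.

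There is a second, related gap: even if you replace the false claim by the weaker (and true) statement that the lift of $D_\alpha$ containing $\tilde\alpha$ is a disc whenever $\tilde\alpha$ sits in a trivial component of $S$, you still have no argument for the case when $\tilde\alpha$ lies inside $S_0$. Nothing prevents $\tilde s_1$ from entering and leaving $S_0$ repeatedly through the non-$\gamma$ part of $\Bd S_0$, producing outermost arcs $\alpha$ whose lift is in $S_0$; there the lift of $D_\alpha$ may contain singular points, and your "push across the disc" isotopy is obstructed. The paper handles exactly this case separately, by invoking \Fullref{regular-arc/lem} to replace the portion of $\tilde s_1$ in $S_0$ by an arc meeting $S_0$ in a single connected arc, and then disposes of the pieces in the trivial components by labelled isotopy (\Fullref{isotopia-disco/fig}). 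Your outermost-arc induction is an attractive idea, and something like it can be made to work, but it must be driven by the dichotomy $S_0$ versus trivial discs rather than by the unavailable claim $\beta(D_\alpha)=0$.
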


\begin{proof} 
Let us start by proving the following claim: {\sl each component of the surface $S = p^{-1}(D)$ cannot intersect simultaneously $\gamma$ and the pseudo-singular set of $p$.} 

In fact, by contradiction, let $S_{1}$ be such a connected component. Consider an arc in $S_{1}$ which projets homeomorphically to an arc $r$, and which connects $\gamma \cap S_{1} \subset \Bd S_1$ with a pseudo-singular point in $S_{1}$. Then we can use $r$ to make a move $\cal T_{2}$ along it. In this way we reduce $\beta(D)$, which is impossible by the minimality hypothesis. This proves our claim.

Now,
let $S_0$ be the connected component of $S$ containing the point $\tilde a = p^{-1}(a) \cap \gamma$. So $S_{0}\cap \gamma \neq \emptyset$, then any other component of $S$ cannot contain singular points of $p$, because to such a singular point would correspond a pseudo-singular point in $S_{0}$, which cannot exist by the claim.

It follows that the other components of $S$ are discs projecting homeomorphically by $p$. Then the singular set of $p_{|S}$, which is not empty because $\beta(D) > 0$, is contained in $S_{0}$. This implies that any component of $S - S_{0}$ contains pseudo-singular points (corresponding to singular points in $S_{0}$). Therefore, by the claim, we have $\gamma \cap S = \gamma \cap S_{0}$.

Now, we can assume that the intersection between the lifting of $s_{1}$ and $S_{0}$ is connected. Otherwise, by \Fullref{regular-arc/lem} we can remove a subarc of $s_{1}$ and replace it with a different one whose lifting is contained in $S_{0}$, to get a connected intersection.

Moreover,
up to labelled isotopy we can also assume that the lifting of $s_{1}$
does not meet the trivial components of $S$. We need some care in doing this, since we want an embedded arc in $B^{2}$. But this can be done, as depicted in \Fullref{isotopia-disco/fig}. 

\begin{Figure}[htp]{isotopia-disco/fig}{}{}
\centering\includegraphics{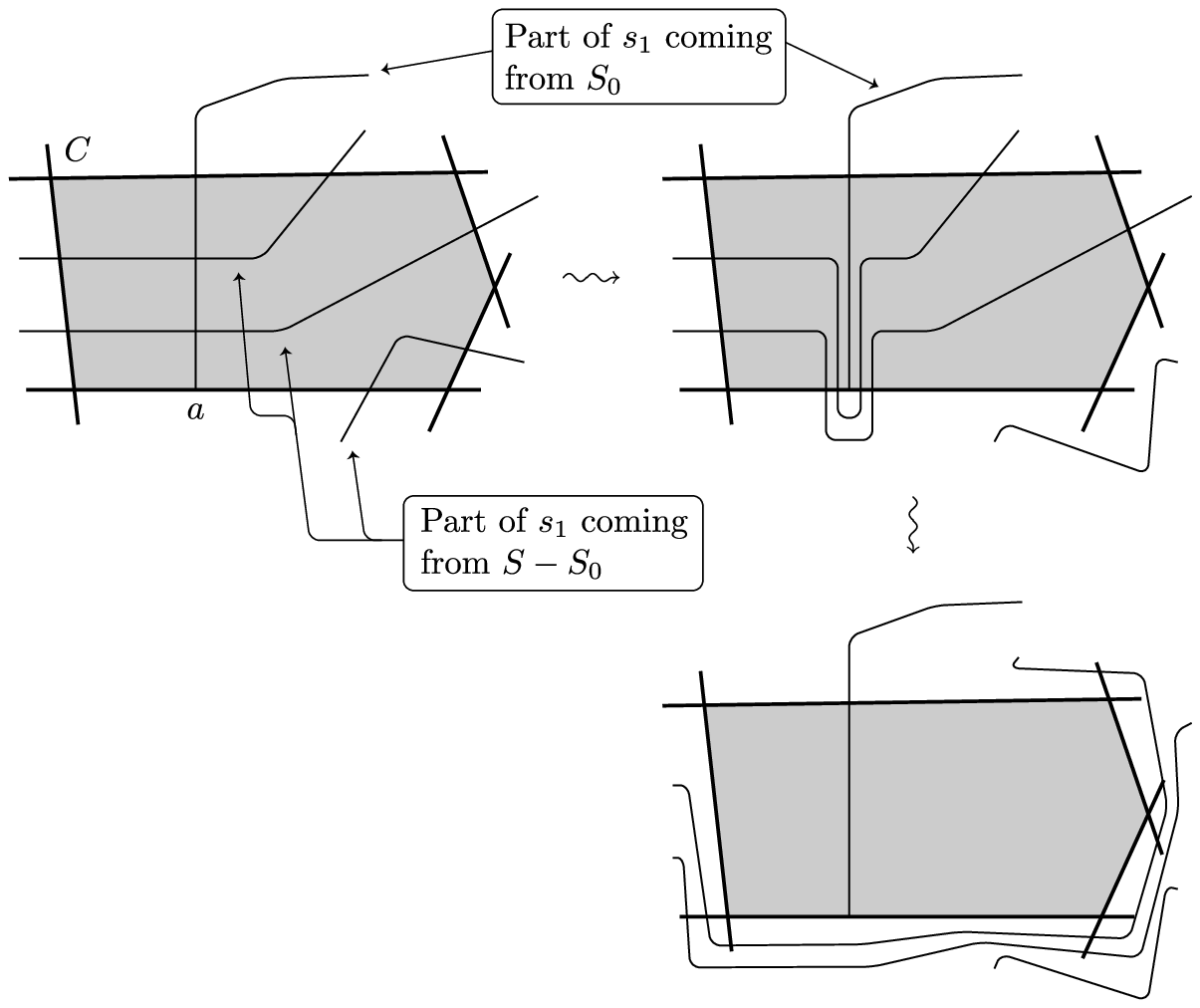}
\end{Figure}

In that figure, the part of $s_{1}$ coming from $S_{0}$ is a well-behaved arc with respect to $D$, while the part of $s_{1} $ coming from $S-S_{0}$ is a set of disjoint arcs, possibly intersecting the previous one. The homotopy of $s_{1}$, liftable to isotopy, which simplify these intersections, follows firstly the arc coming from $S_{0}$ up to the point $a$, and then it simply sends outside $D$ each arc coming from $S-S_{0}$.

The result of the operations above is an embedded arc $s'_1$ whose intersection with $D$ is connected.
\end{proof}

\begin{remark}
Note that in the previous lemma, the arcs $s_{1}$ and $\tilde s_1$ are not modified up to isotopy. Moreover, the proof depends only on the minimality of $D$ up to moves $\cal T_{2}$, and the argument is localized only on $D$, apart from the rest of $C$.
\end{remark}

Let us push the end points $b_1$
and
$b_2$ of
$s$ inside $B^2$, and let $l_{i}=\lambda(b_{i})$. We need these two points later, when we use them as new branching points in a stabilization of $p$. The labels $l_{i}$ become part of the monodromy transpositions.

\begin{lemma}\label{costruzione/lem} \sl Up to stabilizations of $p$ we can find a diagram
$C'$, obtained from $C$ by liftable isotopy in $B^2$, such that the disc
$D'$, corresponding to $D$ through that isotopy, has $\beta(D') = 1$ if $D$ is a 1-gone, or $\beta(D') = 0$ otherwise. In particular, the lifting of $C'$ is a curve isotopic to $\gamma$ in $F$.
\end{lemma}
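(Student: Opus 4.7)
The plan is to adapt the algorithm of Case 1 to the more general setting where $D$ carries singular points of $C$ on its boundary. By \Fullref{arco/lem}, after a preliminary labelled isotopy (liftable to an isotopy of $\gamma$ in $F$) I may assume that $s'_1 \cap D$ is an embedded arc entering $D$ at $a$. Push the endpoint $b_1$ of $s'_1$ slightly inside $\Int B^2$ and stabilize $p$ at $b_1$ by declaring it a new branching point with monodromy transposition $(l_1\ d{+}1)$; choose the new splitting arc $k_{b_1}$ from $b_1$ to $\Bd B^2$ disjoint from $s'_1$. This adjoins a trivial $(d+1)$-th sheet to the cover.

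A move $\cal T_2$ along $s'_1$ then carries $b_1$ across $C$ into $\Int D$, exactly as in Case 1. A labelled isotopy of $C$ along $s'_1$ drags an arc of $C$ up to a small neighbourhood of $b_1$ with label $l_1$: the labels of $C$ and of $s'_1$ at $a$ coincide and undergo the same permutation whenever an arc of $K \cup k_{b_1}$ is crossed. Turning this arc of $C$ once around $b_1$, in the direction of the component of $D \setminus (s'_1 \cup k_{b_1})$ containing the remaining branching points, flips its label to $d+1$ through the transposition $(l_1\ d{+}1)$. Since every original branching point inside $D$ has monodromy transposition in the symmetric group on $\{1,\dots,d\}$, none of them involves $d+1$, so the new arc of $C$ labelled $d+1$ can be slid over each of them by a move $\cal T_2$, expelling them from $D$ one at a time. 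The only branching point remaining in the interior of the resulting disc $D'$ is then $b_1$.

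If $D$ is a 1-gone we stop here: $\beta(D')=1$ is the minimum possible. Indeed, were $\beta(D')=0$, Riemann--Hurwitz would force $p^{-1}(D')$ to be a trivial union of $d$ discs, and the lift of the smooth edge of $\Bd D' \subset C$ to its label sheet would be a loop in $\gamma$ bounding a disc in $F$; as $\gamma$ is an embedded simple closed curve, this loop would have to coincide with all of $\gamma$, forcing $[\gamma]=0$ in $H_1(F)$, contrary to hypothesis. If instead $D$ is an $n$-gone with $n\geq 2$, the presence of at least two singular points on $\Bd D$ provides room to repeat the construction, via a further stabilization at an auxiliary point $b'$ yielding an arc of $C$ labelled $d+2$ inside $D$; a final move $\cal T_2$ then slides $b_1$ out of $D$ along this arc (its monodromy $(l_1\ d{+}1)$ not involving $d+2$), achieving $\beta(D')=0$.

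The main obstacle lies precisely in the $n \geq 2$ step: one must verify that the auxiliary arc needed for the second stabilization can actually be routed through $D$ without reintroducing any of the branching points previously evicted, and that the final $\cal T_2$ slide can be performed compatibly with the already-modified portion of $C$. The hypothesis $n \geq 2$ on the number of vertices of $\Bd D$ is exactly what provides the geometric room for this step, and is what distinguishes it from the 1-gone case. All modifications to $C$ throughout are labelled isotopies or moves of type $\cal T_2$, both of which lift to isotopies of $\gamma$ in $F$, giving the final assertion of the lemma.
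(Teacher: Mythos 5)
Your proposal captures the spirit of the construction but has a concrete gap at its central step. You claim that after one stabilization at $b_1$ and the labelled isotopy that produces an arc of $C$ with label $d+1$, ``the only branching point remaining in the interior of the resulting disc $D'$ is then $b_1$.'' This is not justified. The arc $s'_1$ divides $D$ into two subdiscs $D_1$ and $D_2$, and the arc of $C$ with label $d+1$ --- being obtained by dragging the small arc of $C$ at $a$ along $s'_1$ and turning once around $b_1$ --- only enters one of them. It can therefore expel only the branching points on one side of $s'_1$. Unlike Case 1, where there is freedom in choosing $s$ so that $s\cap D$ isolates a single branching point, here $s'_1$ is forced on you by \Fullref{arco/lem}, and you have no control over how the branching points distribute across the two sides. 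The paper's proof handles this with a second, separate stabilization at a point $b_3$ with monodromy $(l_1\ d{+}2)$, using a parallel arc $s''_1$, to clear the other subdisc $D_2$. Your proposal never performs this second stabilization for $D_2$, and your ``auxiliary point $b'$'' at the end is aimed at evicting $b_1$ itself rather than at clearing $D_2$, so it does not close the gap.

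Two smaller issues. First, the mechanism for moving $b_1$ (and later $b_3$) out of the disc once the original branching points are expelled is not a single $\cal T_2$ along some new arc of label $d+2$; in the paper it is a $\cal T_2$ applied to a short arc of $C$ just past the first singular point of $\Bd D$ reached from $a$, which arrives near $b_1$ with a label different from both $l_1$ and $d+1$, and this requires the presence of a singular point on the boundary of the relevant subdisc --- hence the careful bookkeeping by number of edges of $\Bd D$. You flag this as ``the main obstacle'' but do not resolve it. Second, your Riemann--Hurwitz argument for $\beta(D')\geq 1$ when $D$ is a 1-gone reaches the right conclusion, but the reasoning is muddled: the contradiction comes more directly from the fact that with $\beta(D')=0$ the lift of the single edge of $\Bd D'$ would be an arc of $\gamma$ with both endpoints at the same point of $F$, contradicting that $\gamma$ is embedded (the two branches of $C$ at the vertex of the 1-gone must lift to distinct points of $\gamma$).
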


\begin{proof} We can assume that $\beta(D)$ is minimal up to moves $\cal T_{2}$. If $\beta(D) = 0$, or if $\beta(D) = 1$ and $D$ is a 1-gone, there is nothing to prove. Otherwise
consider the arc $s'_{1}$ given by
\Fullref{arco/lem}. The disc $D$ is divided into two subdiscs $D_{1}$ and $D_{2}$ by $s'_{1}$, and suppose that $D_{1}$ contains branching points. Let $p_{1}$ be the stabilization of $p$ given by the addition of a branching point at $b_{1}$, the free end of $s'_{1}$, with monodromy $(l_{1}\ d+1)$, where as said above $l_{1} = \lambda(b_{1})$, see \Fullref{arco/fig}.

\begin{Figure}[htp]{arco/fig}{}{}
\centering\includegraphics{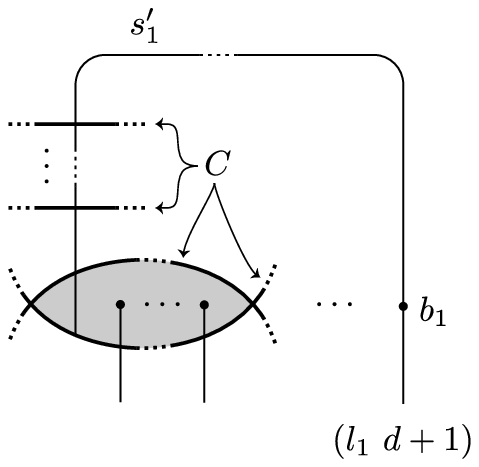}
\end{Figure}

Now we use $s'_{1}$ to isotope $C$, by an isotopy with support in a small regular neighborhood $U$ of $s'_{1}$. Any arc of $U \cap C$, not containing $a$, meets $s'_{1}$ with different label, so these arcs can be isotoped beyond $b_{1}$ by move $\cal T_{2}$. The small arc of $C$ containing $a$ is isotoped in a different way, as in \Fullref{isotopia1/fig} and in \Fullref{isotopia2/fig}, where $s'_{1}$ is not showed. 

\begin{Figure}[htp]{isotopia1/fig}{}{}
\centering\includegraphics{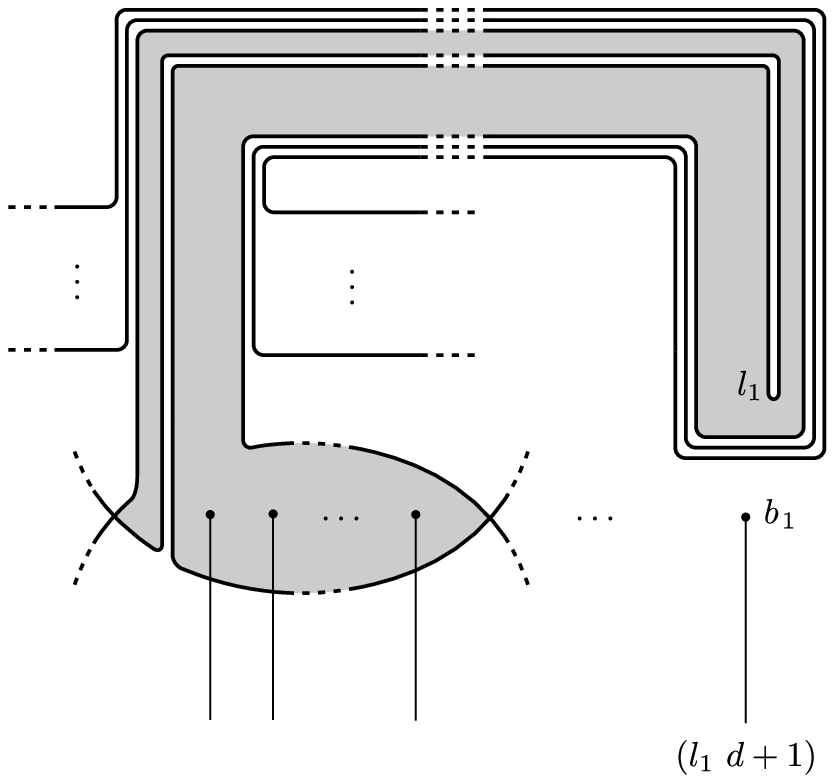}
\end{Figure}

\begin{Figure}[htp]{isotopia2/fig}{}{}
\centering\includegraphics{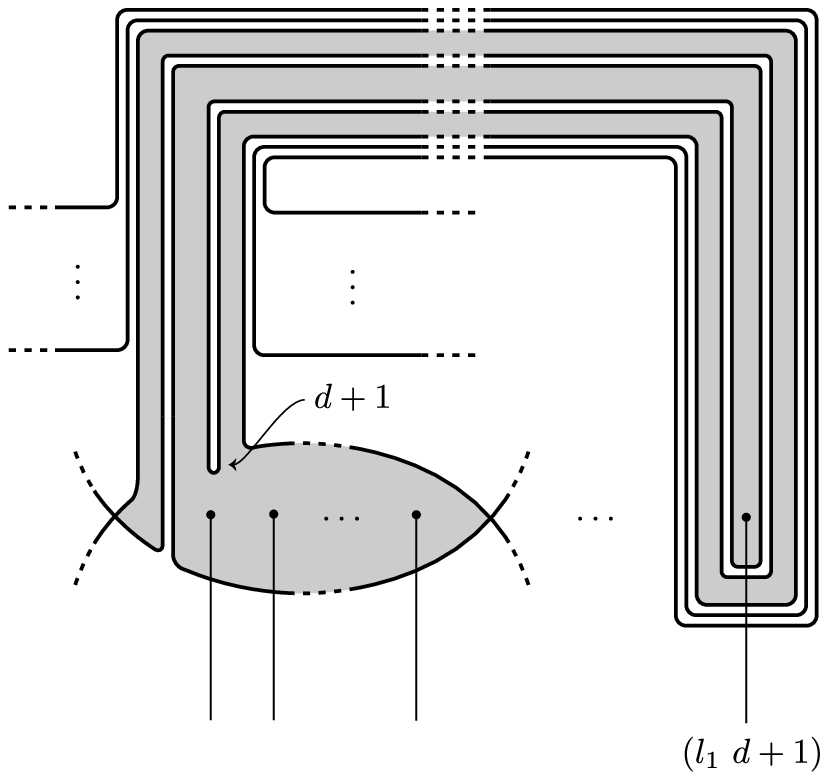}
\end{Figure}

So, this arc starts from $D_{2}$, goes up to $b_{1}$, turns around it and then goes back up to $D_{1}$ (in \Fullref{arco/fig} $D_{1}$ is at the right of $s'_{1}$, while $D_{2}$ is at its left). Since $C$ and $s'_{1}$ have the same label at $a$, they remain with the same label during the isotopy. Therefore the arc of $C$ we are considering, arrives at $b_{1}$ with label $l_{1}$, and so it goes back with label $d+1$ after crossing the new component of the splitting complex.

Then this arc arrives in $D_{1}$ with label $d+1$, as in \Fullref{isotopia3/fig}, and it can wind all the branching points by moves $\cal T_{2}$, since all of these have monodromies $(i\ j)$ with $i, j \leq d$. The result is that the branching points in $D_{1}$ go outside. Note that $b_{1}$ is now inside $D$.

\begin{Figure}[htp]{isotopia3/fig}{}{}
\centering\includegraphics{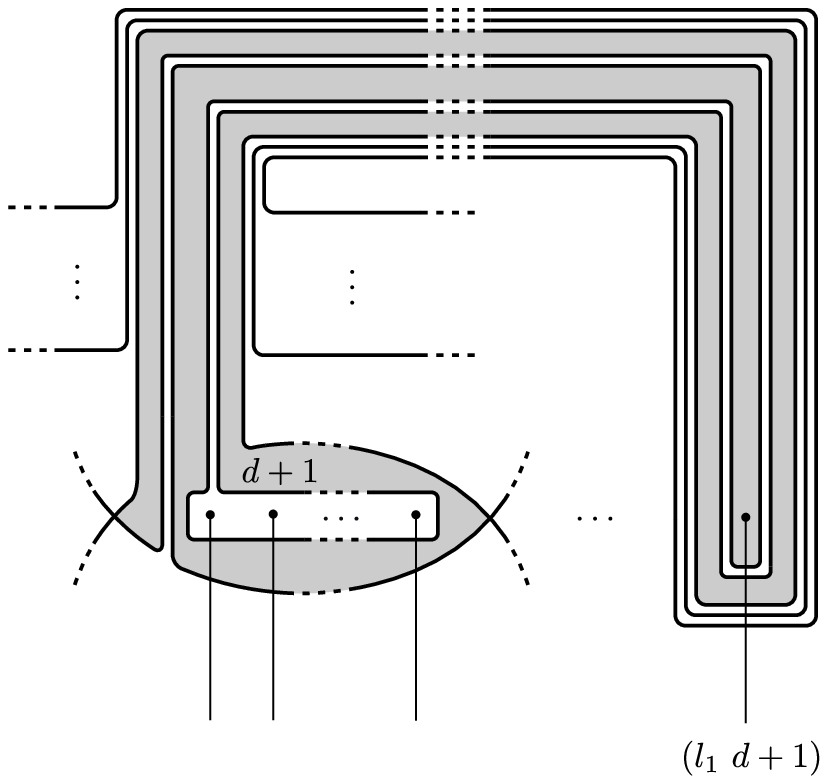}
\end{Figure}

Moreover, if there is a singular point of $C$ in the boundary of $D_{1}$, then we can get $b_{1}$ outside $D_{1}$ by a move $\cal T_{2}$ as in \Fullref{b1fuori/fig}. This move is applied to a small arc after the first singular point of $C$ we get by running along the diagram from the point $a$. That arc, isotoped up to $b_{1}$, takes a label different from $l_{1}$ and $d+1$ and so the move $\cal T_{2}$ applies. 

\begin{Figure}[htp]{b1fuori/fig}{}{}
\centering\includegraphics{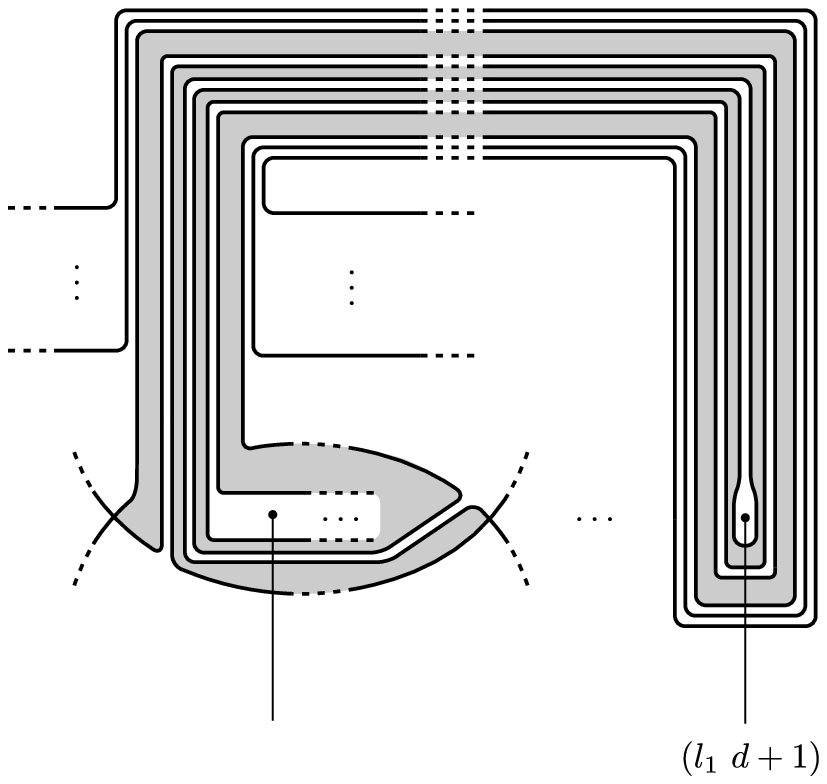}
\end{Figure}

Now we have to remove the branching points in $D_{2}$ (in the isotoped disc, of course). If $\beta(D_{2}) >0$ (after the $\cal T_{2}$-reduction) we need another stabilization. So, consider an arc $s_1''$ obtained from $s'_{1}$, as in \Fullref{isotopia4/fig}. Then we add a new branching point $b_{3}$, at the free end of $s_1''$, with monodromy $(l_{1} \ d+2)$. 

\begin{Figure}[htp]{isotopia4/fig}{}{}
\centering\includegraphics{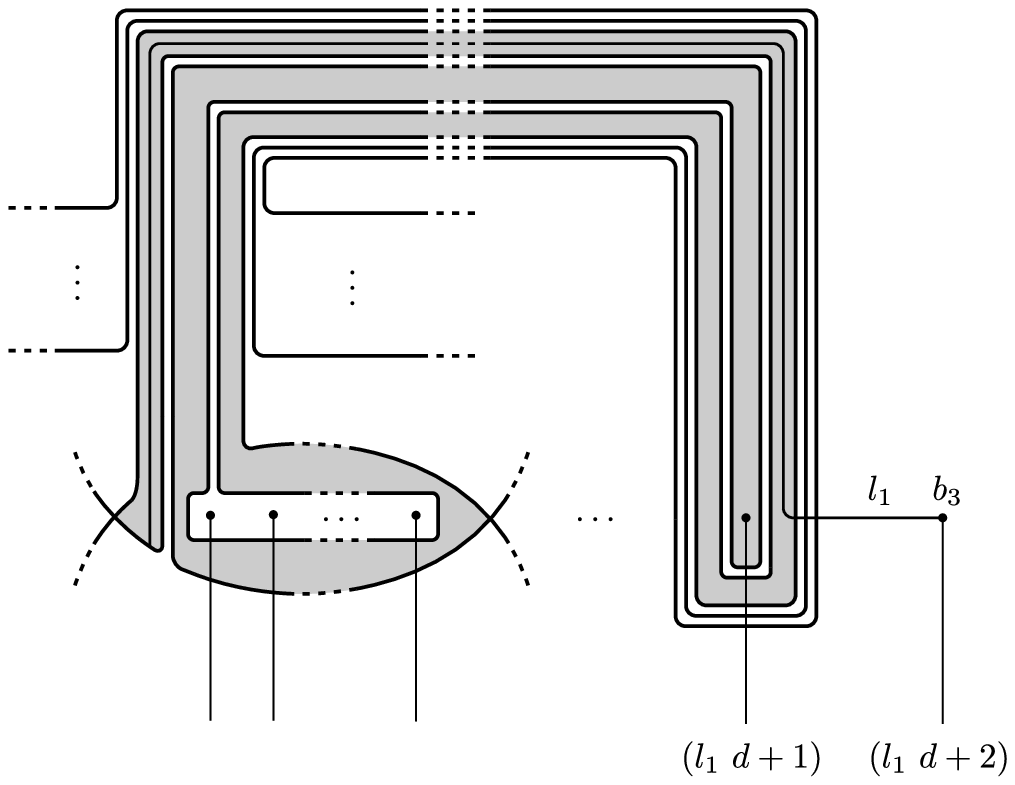}
\end{Figure}

We can now repeat the same argument above, to send outside the branching points of $D_{2}$, by using $s_1''$ instead of $s_1'$. After that, $b_{3}$ turns out to be inside $D_{2}$, and, as above, it can be sended outside if there are singular points of $C$ in $\Bd D_{2}$. Of course, at least one of the $D_{i}$'s contains singular points of the diagram, so at the end we get a disc with at most one branching point inside. If $D$ is a 1-gone, then the proof is completed, since in this case we cannot have $\beta(D) = 0$ (because the lift of $C$ is embedded).

Otherwise, if $D$ is not a 1-gone, then we possibly need another stabilization, as in \Fullref{triangolo/fig}. Here we consider a triangle, which is sufficient for our purposes, but the argument works even for $n$-gones, with $n \geq 3$. If $n=2$ then we can arrange without stabilization by a move $\cal T_{2}$ as in \Fullref{2-gone/fig}.
So, in any case we obtain a new diagram $C'$ and a disc $D'$ which satisfy the required properties.
\end{proof}

\begin{Figure}[htp]{triangolo/fig}{}{}
\centering\includegraphics{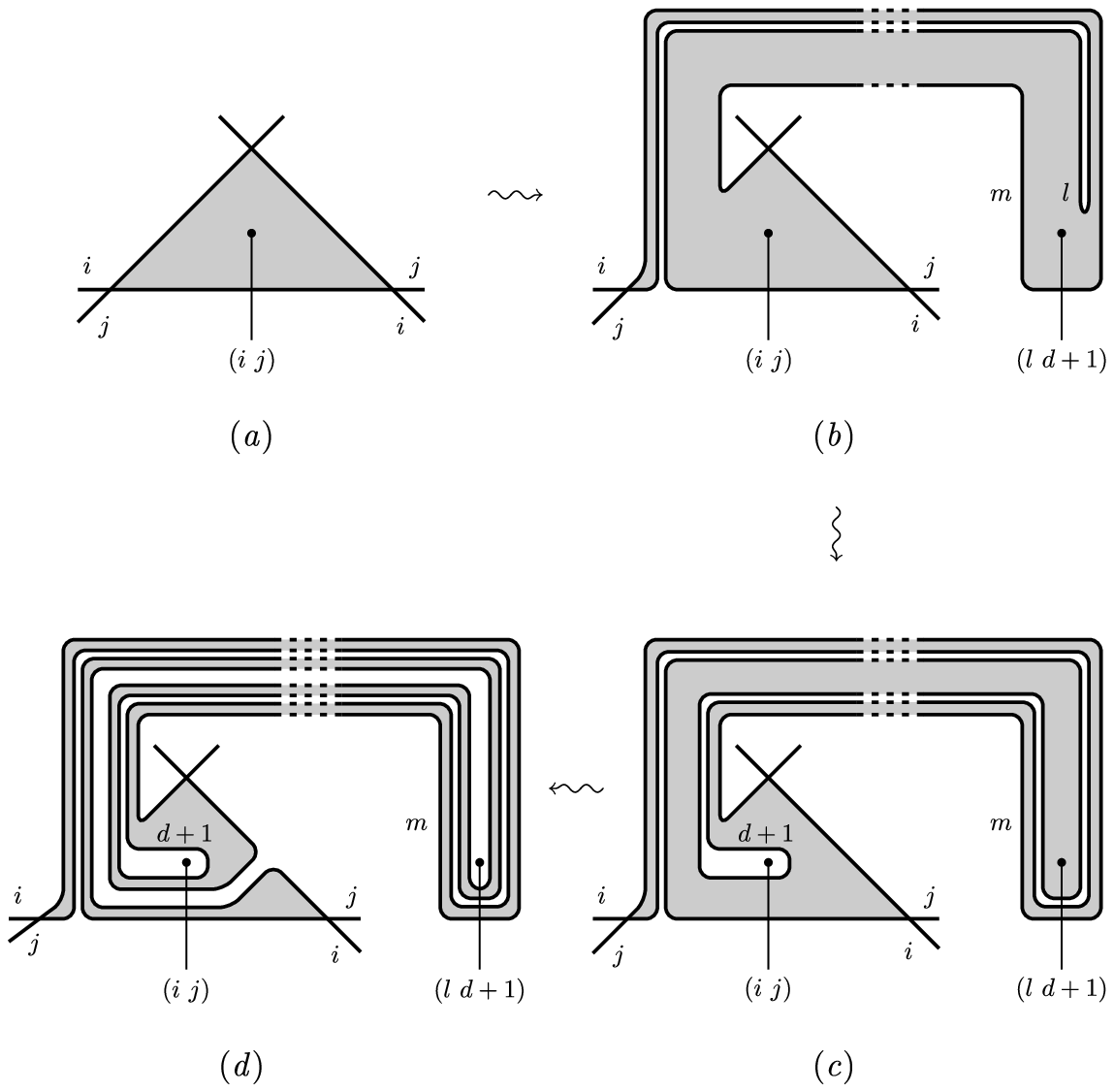}
\end{Figure}

\begin{Figure}[htp]{2-gone/fig}{}{}
\centering\includegraphics{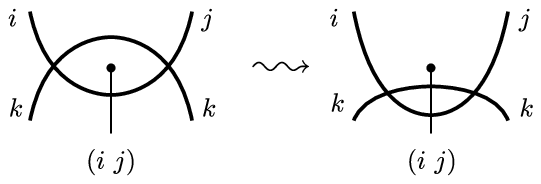}
\end{Figure}

Note that in the proof we do not use the point $b_{2}$. But in principle this point can be used to stabilise the covering, if the arc needed to make the construction is $s_{2}$. In the sequel we apply \Fullref{costruzione/lem} to each region containing branching points, and we will possibly use both the $s_{i}$'s. 

\begin{remark} \label{costruzione/rmk}
Note that \Fullref{costruzione/lem} holds also if $C$ is the diagram of a non-singular arc in $F$. This observation will be useful when considering the general case below.
\end{remark}

Now, we will proceed in the proof of \Fullref{lift-twist/thm}. The idea is to reduce to Case 1, so we have to eliminate the
double points of
$C$.

Every generic immersion $S^{1} \looparrowright B^{2}$ is clearly homotopic to an embedding. Such homotopy can be realized as the composition of a finite sequence of the moves $\cal H_{1}^{\pm 1}$, $\cal H_{3}^{\pm 1}$, and $\cal H_{4}$ of \Fullref{isotopia-reg/fig}, and ambient isotopy in $B^{2}$ (note that $\cal H_4^{-1}$ coincides with $\cal H_4$). These moves are the unlabelled versions of $\cal T_{1}$, $\cal T_{3}$, and $\cal T_{4}$ of \Fullref{moves/fig}.

So, to conclude the proof in this case, it is sufficient to show that, up to stabilizations of $p$, each move $\cal H_{i}^{\pm 1}$ can be realized in a liftable way. Actually, as we will see later, the move $\cal H_{1}^{-1}$ is not really needed, then we do not give a liftable realization of that. 

It follows that a suitable generic homotopy from a singular diagram to a regular one, can be realized as a homotopy liftable to isotopy. Of course, also the ambient isotopy in $B^{2}$ must be liftable, but this turns out to be implicit in the argument we are going to give. 

In the preimage of $\Sing (C)$, take an innermost pair of
points with the same image, to get a disc $D\subset B^2$ as the
gray one in \Fullref{disco/fig}. The disc $D$ is a 1-gone whose
interior possibly intersects $C$, but it does not contain other 1-gones. 

\begin{Figure}[htp]{disco/fig}{}{}
\centering\includegraphics{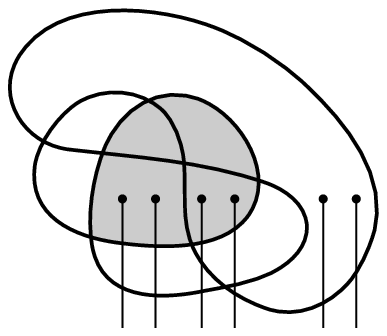}
\end{Figure}

Now, up to regular homotopy in $B^{2}$, we can make $D$ smaller, in order to get a clean 1-gone, meaning that it does not meet other arcs of $C$. Of course, this can be done by the
moves $\cal H_3^{\pm 1}$ and $\cal H_4$ of
\Fullref{isotopia-reg/fig}, and ambient isotopy.

\begin{Figure}[htp]{isotopia-reg/fig}{}{}
\centering\includegraphics{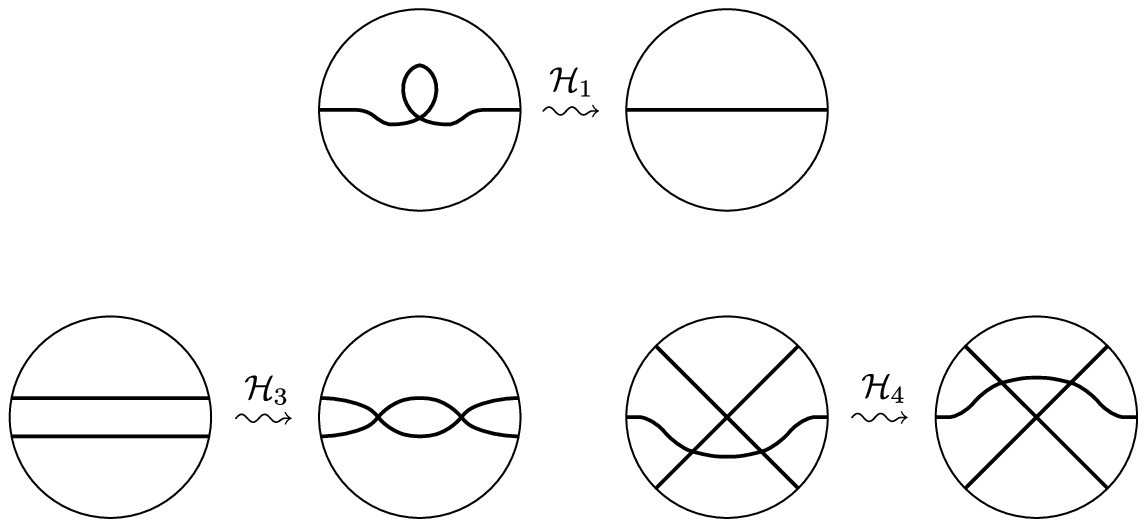}
\end{Figure}

The application of the moves $\cal H_3^{-1}$ and $\cal H_4$ is obstructed by the branching points. By the
\Fullref{costruzione/lem}, we
get an isotopic diagram, with a region free of branching points. So we can realize $\cal H_{3}^{-1}$ and $\cal H_{4}$ as the corresponding liftable versions $\cal T_{3}^{-1}$ and $\cal T_{4}$, by this lemma applied to the corresponding 2 or 3-gone. Note that, after the application of \Fullref{costruzione/lem}, the labels involved in the 2 or 3-gone are, up to labelled isotopy, the right ones needed by $\cal T_{i}$ moves, because the new diagram represents a curve isotopic to $\gamma$ in $F$.

For moves $\cal H_3$, we have troubles in case the two arcs
involved have the same label. Here we first apply
an argument similar to that in the proof of
\Fullref{costruzione/lem}, in order to get an arc with
label $d+1$ in the relevant region, and then the
prescribed move
$\cal H_3$ becomes equivalent to a $\cal
T_3$ and labelled isotopy.

After the cleaning operation of the 1-gone $D$, its interior turns
out to be disjoint from $C$, and then it can be eliminated by the $\cal H_{1}$ move. After another
application of
\Fullref{costruzione/lem}, we get a 1-gone with a single branching point inside. Then the move $\cal H_{1}$
can be realized as a move $\cal T_1^{-1}$, obtaining
a diagram with fewer 1-gones. In this way we can proceed by induction on the number of 1-gones, in order to eliminate the self-intersections of the diagram, without using the move $\cal H_1^{-1}$ at all. This concludes the proof in this case.

\paragraph{General case.} We finally show how to treat the case where the subarcs $s_1$ and $s_2$ are not embedded.

Since $\gamma$ is homologically non-trivial in $F$, there exists a properly embedded arc $\tilde s \subset F$, which meets $\gamma$ in a given single point. Let us put $s= p(\tilde s)$, and let $s_1$ and $s_2$ be the subarcs as above. If the $s_i$'s are singular, then we change them to embedded arcs by an argument similar to that of Case 2.  

The idea is to treat $s$ as a singular diagram and to remove the singular points
by the reduction process we applied to $C$ in Case 2. So we need the analogous of the arc $s$ used above. As we see in \Fullref{riduzione-arco/fig} that analogous is a subarc of $s$ itself, shifted slightly and labelled in the same way.

\begin{Figure}[htp]{riduzione-arco/fig}{}{}
\centering\includegraphics{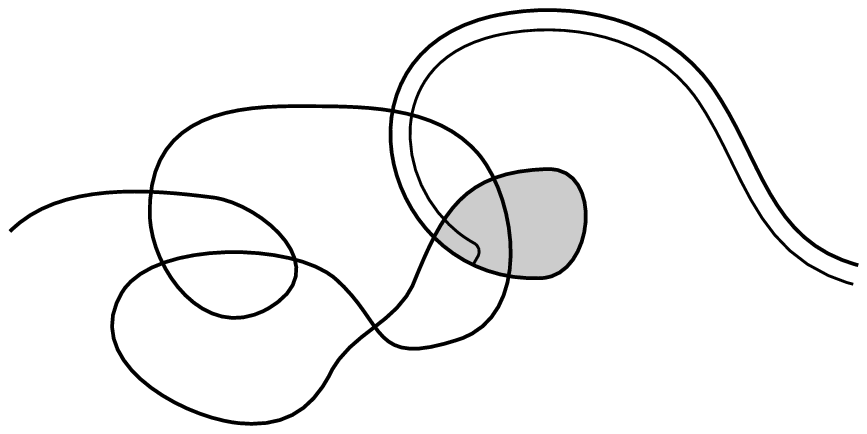}
\end{Figure}

In that figure we consider only the part of the arc relevant for the stabilization process (the part we have said $s_{1}$ above). So, we start from the first 1-gone of $s_{1}$ (or $s_{2}$) that can be reached from an end point, and repeat the same argument we apply to $C$ in Case 2. In this way we get an immersed arc $s$, with $s_1$ and $s_2$ embedded. 

So, for a given move $\cal H_{i}$ of $C$, as in Case 2, we can choose a nice arc $s$, after some stabilizations of $p$, to represent that move as a move $\cal T_{i}$, then in a liftable way.
This suffices to complete the proof of 
\Fullref{lift-twist/thm}. \hfill$\square$\medskip

\section{Final remarks and open questions}
\label{finalrmk/rmk}

Note that the number of stabilizations in the proof of \Fullref{lift-twist/thm} is at most three times the number of components of $B^{2} - C$. Of course, the algorithm can be optimized to reduce the number of stabilizations.

\begin{remark}
In order to prove \Fullref{lift-twist/thm} we do not need further assumptions on $p$, because we work up to stabilizations. Recall that any two simple branched covers of $B^2$ have equivalent stabilizations.
\end{remark}

\begin{remark}\label{interval/rmk} The stabilizations in
the statement of \Fullref{lift-twist/thm} are needed in most
cases. Without them any Dehn twist is still the lifting of a
braid, but in general not of a half-twist, as the next
example shows.
\end{remark}

In fact, consider the covering $p : F \to B^{2}$ of \Fullref{esempio/fig},
where $F$ is a torus with two boundary components, one of these turning twice and the other turning once over $S^{1}$. Let
$\gamma$ be a curve parallel to the boundary component of degree two. Since $\deg(p) = 3$, then $t_\gamma$ is the lifting of a braid \cite{MM91}.

\begin{Figure}[htp]{esempio/fig}{}{}
\centering\includegraphics{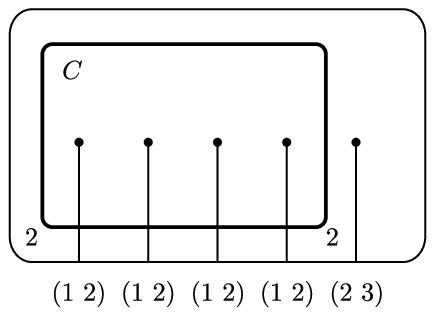}
\end{Figure}

If there is a half-twist representing $t_\gamma$ with respect to $p$, then
$\gamma$ is isotopic to a curve $\gamma'$ whose diagram $C'$ is
as in
\Fullref{interval-diagram/rmk}, so similar to that given in the example of
\Fullref{esempio-interval/fig}. Then $C' = p(\gamma')$ bounds a disc
$D$ containing two branching points. 

Let $H = \Cl(B^2 -
D)$, and consider the branched covering $p_| : p^{-1}(H)
\to H$.
Observe that $p^{-1}(D) = A \sqcup D'$, where $A$ is an
annulus parallel to $\Bd F$ and $D'$ is a trivial disc. Then $\Cl(F - A) = F' \sqcup A'$, with $F' \cong F$ and $A' \cong A$. 

The disc $D'$ is contained either in $F'$ or in $A'$. But $D' \subset F'$ is excluded, because this would imply that the covering $p_{|} : A' \to H$ has degree two over a boundary component of $H$, and one over the other, which is impossible. So we have $D' \subset A'$, which implies that $p^{-1}(H) \cong F' \sqcup S_{0,3}$, where $S_{0,3}$ is a genus 0 surface with three boundary components. It follows that $p_{|}$ has degree two on $S_{0,3}$, and one on $F'$. Then $p_{|} : F' \to H$ is a homeomorphism, which is impossible. The contradiction shows that $\gamma$ cannot be represented as a half-twist.

\begin{Figure}[htp]{esempio-interval/fig}{}{}
\centering\includegraphics{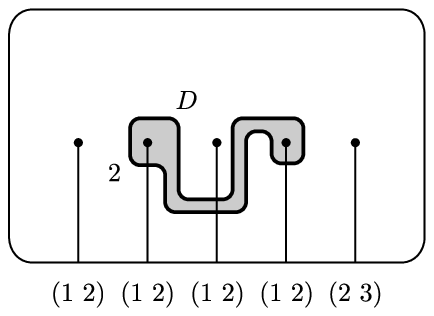}
\end{Figure}

\begin{remark} \label{riv-unico/rmk} If\/ $\Bd F$ is
connected, in \Fullref{riv-unico/cor} we can assume
$\deg(p)=3$. In fact in this case
$m=1$, and the result is well known.
\end{remark}

\begin{remark}
The branched covering $q$ of \Fullref{LF/cor} is deduced from the unique covering of \Fullref{riv-unico/cor}. If we need an optimization on the degree, or even an effective construction, we can get $q : V\to B^{2}\times B^{2}$ starting from the vanishing cycles of $f$, and inductively appling the Representation \Fullref{lift-twist/thm} to them, avoiding to represent every class of curves as in \Fullref{riv-unico/cor} and to get the conjugating braid.
\end{remark}

For a homologically trivial curve $\gamma
\subset F$ it could exist a branched covering $p :
F\to B^2$ such that $p(\gamma)$ is a non-singular curve
covered twice by $\gamma$ and once by the other
components of $p^{-1}(p(\gamma))$.

We conclude with some open questions. 

\begin{question} \sl Given homologically non-trivial curves $\gamma_{1},\dots, \gamma_{n} \subset F$, find a branched covering $p : F\to B^{2}$
of minimal degree, respect to which $t_{\gamma_{i}}$ is the
lifting of a half-twist $\forall\, i$. In particular, determine $\p_{F}$ of minimal degree to optimize \Fullref{riv-unico/cor}.
\end{question}

\begin{question} \sl Given a branched covering $p : F \to
B^2$, and a homologically non-trivial curve $\gamma\subset F$,
understand if $t_\gamma$ is the lifting of a half-twist
with respect to $p$.
\end{question}

In \cite{BP03} Bobtcheva and Piergallini obtain a complete
set of moves relating two simple branched coverings of
$B^4$ representing 2-equivalent 4-dimensional 2-handlebodies. In the light of
\Fullref{LF/cor}, the Bobtcheva and Piergallini theorems
can be used in order to answer the following question.

\begin{question} \sl Find a complete set of moves relating
any two Lefschetz fibrations $f_1, f_2 : V \to B^2$.
\end{question}



\begin{thebibliography}{10}

\bibitem{B01}
{\sc S. Benvenuti}, \emph{Finite presentations for the mapping class group via the ordered complex of curves}, Adv. Geom. 
\textbf{1} (2001), no.~3, 291--321.

\bibitem{BE79} 
{\sc I. Berstein, A.L. Edmonds}, {\sl  On
the construction of branched coverings of low-dimensional
manifolds}, Trans. Amer. Math. Soc. {\bf 247} (1979),
87--124.

\bibitem{B74} 
{\sc J.S. Birman}, {\sl Braids, links,
and mapping class groups}, Annals of Mathematics Studies
82, Princeton University Press, 1974.

\bibitem{BW85} 
{\sc J.S. Birman, B. Wajnryb}, {\sl
$3$-fold branched coverings and the mapping class group of
a surface}, Lecture Notes in Math. 1167, Springer, Berlin,
1985, 24--46.

\bibitem{BW94} 
{\sc J.S. Birman, B. Wajnryb}, {\sl
Errata: $3$-fold branched coverings and the mapping class 
group of a surface}, Israel J. Math. {\bf 88} 1-3
(1994), 425--427.

\bibitem{BP03}
{\sc I. Bobtcheva, R. Piergallini}, \emph{Covering moves and Kirby calculus}, Preprint math.GT/0407032 (2004).

\bibitem{D99}
{\sc S.K. Donaldson}, \emph{Lefschetz pencils on symplectic manifolds}, J. Differ. Geom. \textbf{53} (1999), no.~2, 205--236.
  
\bibitem{GS99} {\sc R.E. Gompf, A.I. Stipsicz}, {\sl
{$4$}-manifolds and {K}irby calculus}, Graduate Studies in
Mathematics 20, American Mathematical Society, 1999.

\bibitem{HS85}
{\sc J. Hass, P. Scott}, \emph{Inersections of curves on surfaces},
Israel J. Math. \textbf{51} (1985), 90--120.

\bibitem{HS94}
{\sc J. Hass, P. Scott}, \emph{Shortening curves on surfaces},
Topology \textbf{33} (1994), 25--43.


\bibitem{L64} {\sc W.B.R. Lickorish}, {\sl A finite
set of generators for the homeotopy group of a
$2$-manifold}, Proc. Cambridge Philos. Soc. {\bf 60}
(1964), 769--778. 

\bibitem{L97}
{\sc W.B.R. Lickorish}, \emph{An introduction to knot theory}, Springer, 1997, Graduate Texts in Mathematics, 175.

\bibitem{LP01}
{\sc A. Loi, R. Piergallini}, \emph{Compact {S}tein surfaces with boundary as branched covers of {$B\sp 4$}}, Invent. Math. \textbf{143} (2001), no.~2, 325--348.


\bibitem{MM91}
{\sc J.M. Montesinos-Amilibia, H.R. Morton}, \emph{Fibred links from closed braids}, Proc. London Math. Soc. (3) \textbf{62} (1991), no.~1, 167--201.

\bibitem{MP98}
{\sc M. Mulazzani, R. Piergallini}, \emph{Representing links in {$3$}-manifolds by branched coverings of {$S\sp 3$}}, Manuscripta Math. \textbf{97} (1998), 1--14.

\bibitem{MP01}
{\sc M. Mulazzani, R. Piergallini}, \emph{Lifting braids}, Rend. Istit. Mat. Univ. Trieste \textbf{32} (2001), no.~suppl. 1, 193--219.

\bibitem{PP05}
{\sc E. Pervova, C. Petronio}, \emph{On the existence of branched coverings between surfaces with prescribed branch data I}, Algebr. Geom. Topol. \textbf{6} (2006), 1957--1985.

\bibitem{PZ05}
{\sc R. Piergallini, D. Zuddas}, \emph{A universal ribbon surface in {$B\sp 4$}}, Proc. London Math. Soc. (3) \textbf{90} (2005), no.~3, 763--782.

\bibitem{R83} {\sc L. Rudolph}, {\sl Braided surfaces and
{S}eifert ribbons for closed braids}, Comment. Math. Helv.
{\bf 58} 1 (1983), 1--37.

\bibitem{R83a} {\sc L. Rudolph}, {\sl Algebraic functions
and closed braids}, Topology {\bf 22} 2 (1983), 191--202.

\bibitem{R85} {\sc L. Rudolph}, {\sl Special positions for surfaces bounded by closed braids}, Rev. Mat. Iberoamericana {\bf 1} (1985), 93--133.

\bibitem{W83}
{\sc B. Wajnryb}, \emph{A simple presentation for the mapping class group of an orientable surface}, Israel J. Math. \textbf{45} (1983), no.~2-3, 157--174.

\end{thebibliography}
\end{document}